\newtheorem{theorem}{Theorem}[section]
\newtheorem{corollary}[theorem]{Corollary}
\newtheorem{definition}[theorem]{Definition}
\newtheorem{lemma}[theorem]{Lemma}
\newtheorem{proposition}[theorem]{Proposition}
\newtheorem{remark}[theorem]{Remark}
\newtheorem*{theorem-non}{Theorem}
\def\RR{{\mathbb{R}}}
\def\NN{{\mathbb{N}}}
\def\11{\textbf{$1$}}
\def\io{\int_\Omega }
\def\ido{\int_{\partial \Omega}}
\def\d{\hbox{\rm div\,}}
\def\sg{\hbox{\rm sign\,}}
\def\z{{\bf z}}
\def\GG{\mathcal G}
\newcommand{\sop}{{\rm supp\,}}
\begin{document}

\keywords{nonlinear elliptic equations, 1--Laplacian operator, Gelfand problem}
\subjclass[2010]{35J75, 35J20, 35J92}

\title[Gelfand type problems]{Gelfand type problems involving the $1$--Laplacian operator}

\thanks{}

\author[A. Molino and S. Segura de Le\'on]{A. Molino and S. Segura de Le\'on}

\address{Alexis Molino
\hfill \break\indent Departamento de Matem\'aticas. Universidad de Almer\'ia,
\hfill\break\indent Ctra. de Sacramento sn. 04120. La Ca\~nada de San Urbano. Almer\'ia, Spain} \email{{\tt amolino@ual.es }}

\address{ Sergio Segura de Le\'on
\hfill\break\indent Departament d'An\`alisi Matem\`atica, Universitat de Val\`encia,
\hfill\break\indent Dr. Moliner 50,
46100 Burjassot, Valencia, Spain} \email{{\tt sergio.segura@uv.es.}}

\date{}

\begin{abstract}
In this paper, the theory of Gelfand problems is adapted to the 1--Laplacian setting. Concretely, we deal with the following problem
\begin{equation*}
  \left\{\begin{array}{cc}
  -\Delta_1u=\lambda f(u) &\hbox{in }\Omega\,;\\[2mm]
  u=0 &\hbox{on }\partial\Omega\,;
 \end{array} \right.
\end{equation*}
where $\Omega\subset\mathbb{R}^N$ ($N\ge1$) is a domain, $\lambda \geq 0$ and $f\>:\>[0,+\infty[\to]0,+\infty[$ is any continuous increasing and unbounded function with $f(0)>0$.

 It is proved the existence of a threshold $\lambda^*=\frac{h(\Omega)}{f(0)}$ (being $h(\Omega)$ the Cheeger constant of $\Omega$) such that there exists no solution when $\lambda>\lambda^*$ and the trivial function is always a solution when $\lambda\le\lambda^*$. The radial case is analyzed in more detail showing the existence of multiple solutions (even singular) as well as the behaviour of solutions to problems involving the $p$--Laplacian as $p$ tends to 1, which allows us to identify proper solutions through an extra condition.
\end{abstract}

\maketitle

 \thispagestyle{empty}

\section{Introduction}

This paper is devoted to analyze Gelfand--type problems when the Laplacian operator is replaced with the 1--Laplacian. Regarding the domain $\Omega\subset\mathbb{R}^N$ ($N\ge1$), it is a  bounded open set having Lipschitz--continuous boundary. Our aim is twofold. On the one hand, we obtain solutions to this kind of problems and check that the main properties of Gelfand--problems driven by the $p$--Laplacian (with $p>1$) still hold. We point out that assumptions of great generality on the function that appears on the right hand side are considered. On the other hand, we provide asymptotic information of Gelfand $p$--Laplacian problems as $p$ goes to 1.

Classical Gelfand problem means the existence and  boundedness of positive solutions to the following semilinear ellip\-tic equation
\begin{equation}\label{Gproblem}
\left\{
\begin{array}{cc}
-\Delta u=\lambda e^u,& \hbox{ in } \Omega,\\[2mm]
u=0, & \hbox{ on } \partial \Omega,
\end{array}
\right.
\end{equation}
where $\lambda>0$. This problem was introduced in the lecture notes \cite{Ge} for its application to thermal self--ignition problems of a chemically active mixture of gases in a vessel (other applications can be found in \cite{Ch, Joseph, Keller}).

Many authors have analyzed this problem obtaining a threshold $\lambda^*>0$ beyond which there is no solution and such that there exists a \emph{minimal} solution $w_\lambda$ for each $\lambda\in[0, \lambda^*[$. Even more, the family $\{w_\lambda\,:\,0\leq \lambda < \lambda^*  \}$ is increasing in $\lambda$.  It is worth mentioning that we mean  \emph{minimal} solution when it is the smallest of any other positive solution. The multiplicity of solutions in the radial case have also been studied jointly with the associated bifurcation diagram, which depends on the dimension $N$ (see \cite{JL}).

In recent decades, the classical Gelfand problem has been extended in two main directions. On the one hand, the exponential function is replaced with convex positive functions,  nondecreasing and superlinear at $+\infty$ (like the power function $f(u)=(1 + u)^m, \, m>1$). In this general setting, we refer to the pioneering works \cite{BV, CR2, MP} and the recent survey \cite{Cabre}. On the other hand, larger classes of operators are considered; we highlight the fractional Laplacian \cite{RO}, the Laplacian with a quadratic gradient term \cite{ACM, M}, the 1--homogeneous $p$--Laplacian \cite{CMR} and the $k$--Hessian \cite{J, JS}.

  However, the most studied problem is that driven by the $p$--Laplacian ($p>1$) \cite{CS1,CFM,GAP, GAPP,S} which extends the previous problem \eqref{Gproblem}  into a more general framework and reads as follows
\begin{equation}\label{plaplacian}
\left\{
\begin{array}{cc}
-\textrm{div}\left(\displaystyle |\nabla u|^{p-2}\nabla u \right)=\lambda f(u),& \hbox{ in } \Omega,
\\
u=0, & \hbox{ on } \partial \Omega,
\end{array}
\right.
 \tag{$Q_\lambda$}
\end{equation}
under the assumptions:
\begin{equation}\label{hipotesis}
\begin{array}{c}
f:[0,\infty[ \to [0,\infty[ \, \hbox{ is an increasing } \mathcal{C}^1 \hbox{ function }
\\
 \hbox{ with } \, \lim_{s\to \infty} f(s)/s^{p-1}=\infty \hbox{ and }  f(0)>0
\end{array}
 \tag{$H_p$}
\end{equation}
Recall that a (weak) solution to the problem is a function $u\in W_0^{1,p}(\Omega)$ satisfying
\begin{equation*}
\int_{\Omega}|\nabla u|^{p-2}\nabla u \cdot \nabla \varphi =\int_\Omega \lambda \, f(u)\varphi, \quad \hbox{ for all } \varphi \in \mathcal{C}_c^{\infty}(\Omega).
\end{equation*}
Note that the solutions are also superharmonic functions. Therefore, by using the strong maximum principle (see for e.g. \cite{Mo}), solutions to \eqref{plaplacian} are positive in $\Omega$.

  On the existence of solutions, it has been shown in (\cite[Theorem 1.4]{CS1}) the existence of a critical value $\lambda_p^*>0$ such that for each $\lambda<\lambda_p^* $ there exists, $w_{\lambda(p)}$, a minimal  and \emph{regular} solution. In addition,  if $\lambda>\lambda_p^*$ then problem $\eqref{plaplacian}$ admits no \emph{regular} solution.
It should be noted that by \emph{regular} solution we mean that $f(w_{\lambda(p)})\in L^\infty(\Omega)$. It should be remembered that, due to regularity results,  this implies that the solution belongs to $\mathcal{C}^{1,\alpha}(\overline {\Omega})$ (see for e.g. \cite{Li}).

    Regarding  the existence and boundedness of solutions to \eqref{plaplacian} for $\lambda=\lambda_p^*$,  called \emph{extremal solution} and we will denote it by $u_p^*:=\lim_{\lambda\to\lambda_p^*}w_{\lambda(p)}$, there are only partial results. Specifically, for $\Omega=B_1(0)$ (the unit ball with center zero), it has been obtained that extremal solution $u_p^*$ is bounded if $N<\frac{p^2+3p}{p-1}$ (\cite[Theorem 1.3]{CCS}). We stress that, in general domains, the optimal dimension that guarantees the boundedness of $u_p^*$ remains unknown. Nevertheless, some interesting results in the original case $p=2$ and $f$ convex satisfying \eqref{hipotesis} should be mentioned. Indeed, the boundedness of extremal solutions  for dimension $N\leq 3$ is   proved  in \cite{N}, for $N=4$ in \cite{V} and, recently, in \cite{CFRS} is obtained for $5\leq N\leq 9$. Observe that this result is optimal since it is well known that for $N\geq 10$, $\lambda=2(N-2)$ and $f(u)=e^u$, there is the presence of the singular $H_0^1(B_1(0))$ stable weak solution: $u^*=-2\log|x|$.
\medskip

This paper is concerned to the limit problem \eqref{plaplacian} as $p$ goes to 1, namely
\begin{equation}\label{problem}
\left\{
\begin{array}{cc}
-\textrm{div}\left(\displaystyle \frac{Du}{|Du|} \right)=\lambda f(u),& \hbox{ in } \Omega,
\\
\\
u=0, & \hbox{ on } \partial \Omega.
\end{array}
\right.
 \tag{$P_\lambda$}
\end{equation}
Here $\Omega$ is an open bounded set  with Lipschitz boundary and $\lambda$ is a positive parameter.
As for nonlinearity $f$, it satisfies the following hypotheses
\begin{equation}\label{hipotes}
\begin{array}{c}
f:[0,\infty[ \to [0,\infty[ \hbox{ is an increasing and continuous }
\\
 \hbox{ function with } \lim_{s\to \infty}f(s)=\infty \hbox{ and } \, f(0)>0
\end{array}
 \tag{$H$}
\end{equation}
Note that \eqref{hipotes} conditions are more relaxed than  \eqref{hipotesis} conditions for $p$--Laplacian problem \eqref{plaplacian}.

 Regarding the 1--Laplacian $\Delta_1=\textrm{div}\left(\displaystyle \frac{Du}{|Du|} \right)$, it has been handled in many articles in recent years. This singular operator has specific features starting from the definition of solution (following \cite{ABCM1, ABCM2, D1}). This notion of solution is introduced in Definition \ref{defi} below. One of the main interests for studying equations involving the 1--Laplacian operator comes from the variational approach to image restoration.

Our objective is to analyze problem \eqref{problem} checking if all the features of Gelfand type problems governed by the $p$--Laplacian operator still hold.
Several aspects of this article are worth noting for their unexpected nature. Firstly, we are able to handle with a general continuous increasing function $f$ without requiring any kind of convexity, growth assumption or smoothness. Taking any of such functions, we make an exhaustive study to  the one--dimensional case (Theorem \eqref{unidimensional} and Proposition \eqref{unidimensional2}). Further, for general domains and without restriction about dimension we show the existence of a critical parameter $\lambda^*=\frac{h(\Omega)}{f(0)}$ (being $h(\Omega)$ the Cheeger constant of $\Omega$) such that there are  solutions when $\lambda\leq \lambda^*$ and nonexistence of solution whenever $\lambda > \lambda^*$ (Theorem \eqref{trivial}).
In addition, the minimal solutions correspond to the trivial ones.

   Another unexpected aspect occurs in the radial setting (Section 5) and refers to the bifurcation diagram Figure \ref{fig:2}.
 Being more precise, it is well known that for $p$--Laplacian Gelfand problems \eqref{plaplacian}, in the unit ball with $f(u)=e^u$ and dimensions  $p<N<\frac{p^2+3p}{p-1}$, there exists a critical value $\overline\lambda_p=p^{\, p-1}(N-p)$ for which the problem has countably many bounded radial solutions, see Figure \ref{fig:4} (\cite{GAPP,JS}, see also \cite{Korman}). In our setting, we also find a critical value $\overline \lambda=\frac{N-1}{f(0)}$ for which our problem has a continuum of bounded solutions for every $f$ satisfying \eqref{hipotes} (Theorem \ref{radial}). Nevertheless, just one of them is a limit of $p$--Laplacian type problems. Concretely, we obtain too many solutions and so we wonder which of those are limit of $p$--Laplacian problems. It turns out that we can identify those proper solutions through an extra condition \eqref{clau} (Theorem \ref{limit2}). It seems that most of bounded solutions to $p$--problems tend to unbounded solutions, except for the minimal solutions that tend towards zero (Theorem \ref{limit_p-laplacian}). Thus, from the point of view of bifurcation diagrams, in the $p$--Laplace framework a curve is obtained that oscillates around $\overline\lambda_p$, while in the limit case the diagram has an asymptote in the axis $\lambda=0$. It is really unexpected that bifurcation diagrams corresponding to $p$--problems tend to a bifurcation diagram so close to zero. A further feature is that, for the 1--Laplacian, this diagram does not depend on the dimension $N\geq 2$.

  This paper is organized as follows: In section 2 we recall some properties of the space of functions of bounded variation as well as the concept of solution to problem \eqref{problem}. In section 3 we deal with the one dimensional case. In section 4 we studied the case $N \geq 2$, specifically the existence of a critical value $\lambda^*$ as well as minimal solutions.
Section 5 is devoted to analyze the radial case $\Omega=B_1(0)$ when $N\geq 2$. Finally, in section 6 we discuss the $p$-Laplacian problem. We compare the results obtained with those of the $p$-Laplacian taking limits when p tends to 1.
We end the section by giving estimates of the threshold $\lambda_p^*$ and its limit as $p$ tends to 1 as well as the limit of minimal solutions to $p$--Laplacian problems when $p$ goes to 1.

\section{Preliminaries}

\subsection{Notation}
Throughout this paper, the symbol $\mathcal H^{N-1}(E)$ stands for the $(N - 1)$--dimensional
Hausdorff measure of a set $E\subset\RR^N$ and $|E|$ for its
Lebesgue measure. Moreover, $\Omega\subset \RR^N$ denotes an open bounded set with Lipschitz boundary. Thus, an outward normal unit
  vector $\nu(x)$ is defined for $\mathcal H^{N-1}$--almost every
  $x\in\partial\Omega$.

   We will denote by $W^{1,q}_{0}(\Omega)$ the usual Sobolev space, of measurable functions having weak gradient in $L^{q}(\Omega;\RR^N)$ and zero trace on $\partial \Omega$. Finally, if $1\leq p< N$, we will denote by $\displaystyle p^{*}=Np/(N-p)$ its Sobolev conjugate exponent.

   \subsection{Functions of bounded variation}
   The natural space to study problems involving the 1--Laplacian is the space of functions of bounded variation, defined as
\[
BV(\Omega)=\left\{u\in L^1(\Omega)\,:\, Du \hbox{ is a bounded Radon measure }   \right\}
\]
where $Du:\Omega \to \RR^N$ denotes the distributional gradient of $u$.
In what follows, we denote the distributional gradient by $\nabla u$ if it belongs to $L^1(\Omega;\RR^N)$. We recall that the space $BV(\Omega)$  with norm
\[
\|u\|_{BV(\Omega)}=\io |Du| +\io |u|
\]
is  a Banach space which is non reflexive and non separable.

On the other hand, the notion of a trace on the boundary of functions belonging to Sobolev spaces can be extended to functions $u\in BV(\Omega)$, so that we may write $u\big|_{\partial\Omega}$, through a bounded operator $BV(\Omega)\hookrightarrow L^1(\partial\Omega)$, which is also onto. As a consequence, an equivalent norm on $BV(\Omega)$ can be defined (see \cite{AFP}):
\begin{equation*}
\|u\|=\io |Du| + \ido |u|\, d\mathcal{H}^{N-1}.
\end{equation*}
We will often use this norm in what follows.

We denote by $J_u$ the set of all approximate jump points of $u$.
For every $x\in J_u$ there exist two real numbers $u^+(x)>u^-(x) $ which are the one--sided limits of $u$ at $x$.

In addition, the following continuous embeddings hold
\[
BV(\Omega) \hookrightarrow L^{m}(\Omega)\,,\quad\hbox{for every }1\le m\le\frac{N}{N-1}\,,
\]
which are compact for $1\leq m <\frac{N}{N-1}$. The continuous embedding $BV(\Omega) \hookrightarrow L^{\frac N{N-1}}(\Omega)$
can be improved in the setting of Lorentz space: $BV(\Omega) \hookrightarrow L^{\frac N{N-1},1}(\Omega)$
(we refer to \cite{A} for the embedding $W^{1,1}(\Omega) \hookrightarrow L^{\frac N{N-1},1}(\Omega)$, we remark that the extension to $BV(\Omega)$ is standard: see \cite{Z}). For a detailed account on Lorentz spaces, we refer to \cite{H}. Besides this embedding, we will just need that $L^{\frac N{N-1},1}(\Omega)$ is a Banach space whose dual is the Marcinkiewicz (or weak Lebesgue) space $L^{N,\infty}(\Omega)$. This is the space of all measurable functions $u\>:\>\Omega\to \RR$ satisfying
\[k^N|\{|u|>k\}|\le C\qquad \hbox{for all }k>0\,,\]
where $C$ is a constant independent of $k$. It is straightforward that $L^N(\Omega)\subset L^{N,\infty}(\Omega)$. The simplest instance of a function $L^{N,\infty}(\Omega)\backslash L^N(\Omega)$ is defined by $\displaystyle u(x)=\frac1{|x|}$.

  In this paper, we will use some functionals  which are lower semicontinuous with respect to the $L^1$--convergence. Besides the BV--norm, we also apply the lower semicontinuity of the functional given by
\[
u \mapsto \int_{\Omega} \varphi \, |Du|,
\]
where $\varphi$ is a nonnegative smooth function.

For further properties of functions of bounded variations, we refer to \cite{AFP} (see also \cite{EG,Z}).

\subsection{$L^\infty$--divergence--measure vector fields}

Following \cite{ABCM1,D1}, we define the concept of solution to problem \eqref{problem} through a vector field $\z$ which plays the role of $\frac{Du}{|Du|}$.
Since we need to give a meaning to the dot product of $\z$ and the gradient of a function of bounded variation as well as the weak trace on $\partial \Omega$ of the normal component of $\z$, the Anzellotti theory is required. We remark that most of solutions we deal with are bounded, so that this theory applies. Nonetheless, we find some unbounded solutions which cannot be studied within Anzellotti's theory. Therefore, we have to develop a slight extension of this theory, which we next introduce.

Consider the space $\mathcal X(\Omega)=\left\{{\bf z}\in L^\infty(\Omega;\RR^N) \, : \, \d \,{\bf z}\in L^{N,\infty}(\Omega)\right \}$. For $\z\in \mathcal X(\Omega)$ and $u\in BV(\Omega)$, we define
$
({\bf z},Du):\mathcal{C}_c^\infty(\Omega)\to \RR
$
a distribution  by:
\begin{equation}\label{green-anze}
\left<({\bf z}, Du), \varphi \right>=-\io u\, \varphi \, \d \, {\bf z}-\io u\, {\bf z} \, \nabla \varphi, \quad \forall \, \varphi \in \mathcal{C}_c^\infty(\Omega) \,.
\end{equation}
We point out that the first integral is well--defined since $u\in BV(\Omega)\subset L^{\frac N{N-1},1}(\Omega)$ and $\d\z\in L^{N,\infty}(\Omega)$.
This distribution was introduced in \cite{An} for some pairs $(\z, u)$ satisfying certain compatibility conditions. For instance, $\d\z\in L^N(\Omega)$ and $u\in BV(\Omega)$ or $\d\z\in L^1(\Omega)$ and $u\in BV(\Omega)\cap L^\infty(\Omega)$. In such cases, it is proved that $({\bf z}, Du)$ is a Radon measure with finite total variation. More precisely, it is seen that for every Borel $B$ set with $B\subseteq U\subseteq \Omega$ ($U$ open) it holds
\begin{equation}\label{Borel}
\left| \int_B ({\bf z}, Du) \right| \leq \int_B \left| ({\bf z}, Du)  \right | \leq \|\z\|_{L^\infty(U)}\int_B |Du|\,.
\end{equation}
Taking advantage of the cases already treated, first using truncations $T_k(u)$ and then letting $k$ go to $\infty$, this inequality can easily been extended to every $u\in BV(\Omega)$ and $\z\in\mathcal X(\Omega)$.

We recall the notion of weak trace on $\partial \Omega$ of the normal component of ${\bf z}$, denoted by $\left[{\bf z}, \nu \right]$, where $\nu$ stands for the outer normal unitary vector of $\partial \Omega$. It is defined in \cite{An} as an extension of the classical one, that is,
\begin{equation}\label{strip}
\left[{\bf z}, \nu \right]=\z \cdot \nu,\quad \hbox{for }\, \z \in \mathcal{C}^1(\overline \Omega_\delta; \RR^N)\,,
\end{equation}
where $\Omega_\delta=\left\{x\in \Omega \,:\, {\rm dist}(x,\partial \Omega)<\delta   \right\}$, for some $\delta>0$ sufficiently small.
It satisfies $\left[\z, \nu \right]\in L^\infty(\partial \Omega)$ and $\|\left[{\bf z}, \nu \right]\|_{L^\infty(\partial \Omega)} \leq \| \z \|_{L^\infty(\Omega;\RR^N)}$.

In \cite{An} a Green formula involving the measure $\left(\z, Du   \right)$ and the weak trace $\left[{\bf z}, \nu \right]$ is established, namely:
\begin{equation}\label{Green-2}
\io \left(\z,Du   \right)+ \io u\, \d \, \z =\ido u \left[\z,\nu  \right]d\mathcal{H}^{N-1}
\end{equation}
for those pairs $(\z, u)$ considered in \cite{An}.
This formula also holds for $\z \in \mathcal X(\Omega)$ and $u \in BV(\Omega)$; to prove it, just use truncations again.

\subsection{Definition of solution}

Once we have the suitable theory of $L^\infty$--divergence--measure vector fields, we are in a position to introduce the definition of solution to our problem.

\begin{definition}\label{defi}
A function $u\in BV(\Omega)$ is said to be a solution to problem \eqref{problem} if $f(u)\in L^{N,\infty}(\Omega)$ and there exists a vector field $\z \in L^\infty(\Omega;\RR^N)$ satisfying
\begin{enumerate}
  \item $\|\z\|_\infty\le 1$
  \item $-\d\z=\lambda f(u)$ in $\mathcal D'(\Omega)$
  \item $(\z,Du)=|Du|$ as measures in $\Omega$
  \item $[\z,\nu]\in \sg(-u)$ \quad $\mathcal H^{N-1}$--a.e. on $\partial\Omega$
\end{enumerate}
\end{definition}

We remark that since $\d\z\in L^{N,\infty}(\Omega)$, it follows that the theory of the previous subsection applies. We point out that the unbounded radial solutions we find always satisfy $f(u)=\frac{N-1}{\lambda |x|}$, so that $f(u)\in L^{N,\infty}(\Omega)$.

Notice that, in definition \ref{defi}, the fields $\z$ plays the role of $\displaystyle \frac{Du}{|Du|}$ owing to $\|\z\|_\infty\le1$ and $(\z, Du)=|Du|$ hold.

We point out that, in general, the boundary condition does not hold in the sense of traces. Condition (4) is a weak form of the boundary condition.

\section{Unidimensional case}

In this section, we consider a bounded open set $\Omega\subset\RR$. Then $\Omega$ can be expressed as a union of countably many pairwise disjoint open intervals, that is: $\Omega=\cup_{n\in I}]a_n,b_n[$ with $]a_n, b_n[$ pairwise disjoint,
here $I$ denotes either the set of all positive integers $\NN$ or $\{ 1, 2, \dots , k\}$. Moreover, we write
$L=\max_{n\in I}(b_n-a_n)$, so that there is $n\in I$ which attains this length: $L=b_n-a_n$ (observe that it needs not be unique).

Therefore, problem \eqref{problem} becomes
\begin{equation}\label{uni-d}
  \left\{\begin{array}{cc}
  -\left(\frac{u^\prime}{|u^\prime|}\right)^\prime=\lambda f(u) &\hbox{in }\Omega\,;\\
  u=0 &\hbox{on }\partial\Omega\,.
 \end{array} \right.
\end{equation}

To begin with a remark is in order. Every solution to problem \eqref{uni-d} is constant on each interval $]a_n, b_n[$. Indeed, assume to get a contradiction that a solution $u$ is increasing on an interval $]\alpha,\beta[\subset\Omega$. Since the derivative $u^\prime$ is positive on this interval, we have $\z u^\prime=u^\prime$ and so $\z=1$ on $]\alpha,\beta[$. Hence, $\z^\prime=0$ and  the equation implies $\lambda f(u)=0$, which is not possible. A similar argument holds for a solution which is decreasing on an interval.

\begin{theorem}\label{unidimensional}
Set $\lambda^*=\frac2{Lf(0)}$.
\begin{enumerate}
  \item If $\lambda>\lambda^*$, then problem \eqref{uni-d} has no solution.
  \item If $0<\lambda\le\lambda^*$, then the trivial solution is the minimal solution to problem \eqref{uni-d}.
\end{enumerate}
\end{theorem}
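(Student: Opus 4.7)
The plan is to exploit the observation recorded just before the statement: any solution $u$ is constant on each connected component $]a_n,b_n[$ of $\Omega$. Call this value $c_n$. The equation $-\z'=\lambda f(c_n)$ then becomes an elementary ODE on each interval, and the weak trace condition $[\z,\nu]\in\sg(-u)$ from Definition~\ref{defi}(4) pins down $\z$ at the endpoints: since $\nu(a_n)=-1$ and $\nu(b_n)=+1$, one reads off that $c_n>0$ forces $\z(a_n^+)=1$ and $\z(b_n^-)=-1$; $c_n<0$ forces $\z(a_n^+)=-1$ and $\z(b_n^-)=1$; and $c_n=0$ only imposes $\z(a_n^+),\z(b_n^-)\in[-1,1]$. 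Integrating $-\z'=\lambda f(c_n)$ on $]a_n,b_n[$ yields the balance
\[
\z(a_n^+)-\z(b_n^-)=\lambda f(c_n)(b_n-a_n).
\]

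For part (1) I would run a case analysis on the sign of $c_n$ applied to any component of maximal length $L$. The case $c_n<0$ is immediately ruled out because the left-hand side becomes $-2$ while the right-hand side is strictly positive. If $c_n>0$ then the identity reads $\lambda f(c_n)L=2$, which combined with the monotonicity bound $f(c_n)\geq f(0)$ gives $\lambda\leq 2/(Lf(0))=\lambda^*$. If $c_n=0$ then one only has the inequality $\lambda f(0)L\leq 2$, again producing $\lambda\leq\lambda^*$. Either alternative contradicts the hypothesis $\lambda>\lambda^*$, so no solution exists.

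For part (2) I first verify that $u\equiv 0$ solves \eqref{uni-d}: on each $]a_n,b_n[$ I would define $\z$ to be the affine function of slope $-\lambda f(0)$ that is odd about the midpoint, so that $\z(a_n^+)=-\z(b_n^-)=\lambda f(0)(b_n-a_n)/2$. The hypothesis $\lambda\leq\lambda^*=2/(Lf(0))$ yields $\|\z\|_\infty\leq\lambda f(0)L/2\leq 1$; the conditions $-\d\z=\lambda f(0)$ and $(\z,Du)=|Du|=0$ are immediate because $u$ is constant on each component, and $[\z,\nu]\in[-1,1]=\sg(0)$ holds by construction. Minimality then follows from the sign analysis of part (1): every solution satisfies $c_n\geq 0$ on each component, so any solution is pointwise nonnegative and hence dominates $u\equiv 0$.

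The main technical point is the rigorous passage from the distributional equation $-\d\z=\lambda f(c_n)$ to the pointwise boundary values of $\z$. Since the right-hand side is constant on each component, $\z$ admits an affine representative on $]a_n,b_n[$ with well-defined one-sided limits at the endpoints, and the weak trace $[\z,\nu]$ from \eqref{strip} coincides with the classical boundary trace. Once this identification is in place, the remainder of the argument is the short ODE computation above.
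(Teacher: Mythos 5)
Your proof is correct and follows essentially the same route as the paper: constancy of $u$ on each component, the affine form of $\z$ forced by $-\z'=\lambda f(c_n)$, and the resulting inequality $\lambda f(c_n)(b_n-a_n)\le 2$ applied to a component of maximal length, together with $f(c_n)\ge f(0)$. The only divergence is that the paper derives this inequality purely from $\|\z\|_\infty\le 1$ (an affine function taking values in $[-1,1]$ on an interval of length $b_n-a_n$ has slope at most $2/(b_n-a_n)$ in absolute value), never invoking the normal trace condition; this treats your cases $c_n>0$ and $c_n=0$ uniformly and sidesteps the weak-trace/classical-trace identification you flag as the main technical point. Your construction for part (2) coincides with the paper's.
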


\begin{proof}
(1) Assume that $u$ is a solution to problem \eqref{uni-d}. Then $u$ must be constant on each interval $]a_n,b_n[$, say $u(x)=A_n$. Indeed, from equation \eqref{uni-d} it follows that $\z^\prime(x)=-\lambda f(A_n)$,
so $\z(x)=-\lambda f(A_n)x+C$ for some constant $C$. Having in mind the condition $\|\z\|_\infty\le1$, the steepest slope occurs when $\z(x)=\frac{1}{b_n-a_n}(-2x+a_n+b_n)$. Then $\lambda f(A_n)\le \frac2{b_n-a_n}$ wherewith
\[
f(0)\le f(A_n)\le \frac2{(b_n-a_n)\lambda}\,.
\]
Therefore, $\lambda\le  \frac{2}{(b_n-a_n)f(0)}$ for all $n\in\NN$ and so $\lambda\le  \frac2{Lf(0)}$.

(2) It is straightforward that $u=0$ is a solution with an associated function given by $\z(x)=\lambda f(0)(-x+\frac{b_n+a_n}2)$ for $x\in]a_n,b_n[$.
\end{proof}

\begin{proposition}\label{unidimensional2}
\
\begin{enumerate}
  \item If  $\, 0<\lambda<\lambda^*$, then there exist non trivial nonnegative solutions.
  \item If $\, \lambda=\lambda^*$, then every solution vanishes on each interval whose length is $L$.
\end{enumerate}
\end{proposition}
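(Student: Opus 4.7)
The approach exploits the observation made just before Theorem \ref{unidimensional}: on each component $]a_n,b_n[$ of $\Omega$, a solution $u$ must take a constant value $A_n\ge 0$. Combined with the equation $-\z'=\lambda f(A_n)$, the vector field $\z$ is then affine on $]a_n,b_n[$ with slope $-\lambda f(A_n)$, and its endpoint values are fixed by the boundary condition $[\z,\nu]\in\sg(-u)$ (using that the outward normals to $\Omega$ at $a_n$ and $b_n$ are $-1$ and $+1$ respectively). When $A_n>0$, this forces $\z(a_n)=1$ and $\z(b_n)=-1$, hence $\lambda f(A_n)(b_n-a_n)=2$; when $A_n=0$, only the looser constraint $\lambda f(0)(b_n-a_n)\le 2$ is needed, leaving the affine $\z$ free within $[-1,1]$.

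For part (1), the plan is to fix a component $]a_{n_0},b_{n_0}[$ of maximal length $L$. Since $\lambda<\lambda^*=\frac{2}{Lf(0)}$, one has $\frac{2}{L\lambda}>f(0)$, so the continuity and unboundedness of $f$ together with the intermediate value theorem yield some $A>0$ with $f(A)=\frac{2}{L\lambda}$. I will set $u\equiv A$ on $]a_{n_0},b_{n_0}[$ and $u\equiv 0$ on the remaining components, and define $\z$ piecewise: affine from $1$ to $-1$ on the first component, and $\z(x)=\lambda f(0)\bigl(\tfrac{a_n+b_n}{2}-x\bigr)$ on each other $]a_n,b_n[$. The four conditions of Definition \ref{defi} then follow routinely: (2) holds by construction; (3) is trivial since $Du=0$ on each component; (4) is immediate from the prescribed endpoint values (with $\sg(0)=[-1,1]$ absorbing any choice on the shorter components); and (1) reduces on every shorter component to $\lambda f(0)(b_n-a_n)\le \lambda f(0)L<2$.

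For part (2), assume $u$ is any solution at $\lambda=\lambda^*$ and suppose, toward a contradiction, that some component $]a_n,b_n[$ of length $L$ carries $A_n>0$. The forced identity $\lambda^* f(A_n)=\tfrac{2}{L}$ immediately becomes $f(A_n)=f(0)$, which contradicts the strict monotonicity of $f$; hence $A_n=0$ on every component of length $L$.

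No genuine obstacle is anticipated. The only mildly delicate point is reading off the trace $[\z,\nu]$ at the endpoints from the affine $\z$, but this is legitimate because $\z$ is of class $\mathcal{C}^1$ up to the boundary of each component, so the classical identification \eqref{strip} applies verbatim; the rest is bookkeeping over the (possibly countable) family of components.
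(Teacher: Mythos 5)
Your proposal is correct and follows essentially the same route as the paper: in part (1) you build the same piecewise-constant solution with the steepest affine $\z$ on a maximal-length component and the flat affine $\z$ elsewhere (the paper does this for an arbitrary nonempty subfamily $I_1$ of components, thereby exhibiting many solutions, but a single component suffices for existence), and in part (2) you reach the same contradiction $f(A_n)\le f(0)$. The only cosmetic difference is that in part (2) you pin down $\z(a_n)=1$, $\z(b_n)=-1$ via the boundary condition $[\z,\nu]\in\sg(-u)$ to get the exact identity $\lambda^*f(A_n)(b_n-a_n)=2$, whereas the paper gets the inequality $\lambda^*f(A_n)\le 2/L$ directly from $\|\z\|_\infty\le1$ and the affineness of $\z$, which avoids any discussion of traces at the endpoints; both arguments are sound.
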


\begin{proof}
(1) Fix $0<\lambda<\lambda^*$ and split the index set $I=I_1\cup I_2$, with $I_1\ne\emptyset$. Then consider
\begin{equation*}
  u(x)=\left\{\begin{array}{cc}
 \displaystyle   f^{-1}\Big(\frac2{(b_n-a_n)\lambda}\Big) &\hbox{if }x\in]a_n,b_n[ \hbox{ and } n\in I_1\,;\\[5mm]
  0 &\hbox{otherwise;}
  \end{array}\right.
\end{equation*}
 and
 \begin{equation*}
  \z(x)=\left\{\begin{array}{cc}
 \displaystyle   \frac1{b_n-a_n}(-2x+a_n+b_n) &\hbox{if }x\in]a_n,b_n[ \hbox{ and } n\in I_1\,;\\[5mm]
 \displaystyle  \lambda f(0)\left(-x+\frac{b_n+a_n}2\right) &\hbox{otherwise.}
  \end{array}\right.
\end{equation*}
It is easy to check that $u$ is a solution to problem \eqref{uni-d} with associated function $\z$.

Therefore, each choice of $I_1$ generates a nontrivial solution.

(2) Consider an interval $]a_n,b_n[$ such that $b_n-a_n=L$. Since the function $\z$ having the steepest slope is given by $\frac{1}{b_n-a_n}(-2x+a_n+b_n)$, it follows that
$$\lambda^*f(u)=-\z^\prime\le \frac{2}{b_n-a_n}=\frac 2L=\lambda^*f(0)\,.$$
Thus, $f(u)\le f(0)$ and so $u=0$ by the increasing hypothesis on $f$.
\end{proof}

\begin{remark}\rm
Observe that, when $I_1$ is an infinite set and $f(s)=e^s$, the solution defined in the above Proposition is not bounded since $u(x)=\log\Big(\frac2{(b_n-a_n)\lambda}\Big)$ and $b_n-a_n$ is arbitrarily small.
\end{remark}

We next apply the previous results to the case $\Omega=]-1,1[$.

\begin{corollary}\label{corollary}
Let $\Omega=]-1,1[$. Then $\lambda^*=\frac1{f(0)}$ and the solutions to problem \eqref{uni-d} are the following
\begin{enumerate}
  \item (Minimal solutions) If $0<\lambda\le\lambda^*$, the minimal solution is the trivial one.
  \item (Positive solutions) If $0<\lambda<\lambda^*$, a second solution is given by
  \begin{equation*}
  u(x)=f^{-1}\Big(\frac1{\lambda}\Big)
\end{equation*}
 with associated function
$\displaystyle   \z(x)=-x$.
\end{enumerate}
\end{corollary}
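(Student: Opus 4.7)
The plan is to observe that both parts of the Corollary are direct specializations of Theorem \ref{unidimensional} and Proposition \ref{unidimensional2} to the case $\Omega=\,]-1,1[$. Here the index set reduces to $I=\{1\}$ with $a_1=-1$, $b_1=1$, so the maximal length of a connected component is $L=b_1-a_1=2$. Substituting into the formula $\lambda^*=\frac{2}{Lf(0)}$ of Theorem \ref{unidimensional} yields $\lambda^*=\frac{1}{f(0)}$.

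For part (1), Theorem \ref{unidimensional}(2) directly supplies the minimal trivial solution $u\equiv 0$ with associated field $\z(x)=\lambda f(0)\bigl(-x+\frac{a_1+b_1}{2}\bigr)=-\lambda f(0)\,x$, which satisfies $\|\z\|_\infty=\lambda f(0)\le 1$ thanks to the assumption $\lambda\le\lambda^*$. For part (2) I would invoke Proposition \ref{unidimensional2}(1) with the only admissible nontrivial choice $I_1=\{1\}$: its formulas collapse to $u(x)=f^{-1}\bigl(\frac{2}{(b_1-a_1)\lambda}\bigr)=f^{-1}(1/\lambda)$ and $\z(x)=\frac{1}{b_1-a_1}(-2x+a_1+b_1)=-x$, which is exactly the pair claimed in the Corollary.

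To close the argument I would verify the four conditions of Definition \ref{defi} for this constant $u$: $\|\z\|_\infty=1$ and $-\z'(x)=1=\lambda\, f\bigl(f^{-1}(1/\lambda)\bigr)=\lambda f(u)$ are immediate, condition (3) is vacuous since $u$ is constant so $Du\equiv 0$, and the weak boundary condition reduces to $[\z,\nu](\pm 1)=\pm\z(\pm 1)=-1\in\sg(-u)$, which holds because $u=f^{-1}(1/\lambda)>0$. There is no real obstacle in this proof; the only point needing care is the strict inequality $\lambda<\lambda^*$ in (2), which is precisely what guarantees $1/\lambda>f(0)$ and hence that $f^{-1}(1/\lambda)$ is defined and strictly positive.
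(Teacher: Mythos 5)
Your proposal is correct and is essentially the argument the paper intends: the corollary is obtained by specializing Theorem \ref{unidimensional} and Proposition \ref{unidimensional2} to the single interval $]-1,1[$ with $L=2$ and $I_1=\{1\}$, and your additional direct verification of the four conditions of Definition \ref{defi} (including the well-definedness and positivity of $f^{-1}(1/\lambda)$ when $\lambda<\lambda^*$) is accurate.
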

This last result is shown in the following Figure \ref{fig:1}. There, the continuum of solutions is illustrated for the Gelfand problem with the 1-Laplacian operator in the unit ball with  dimension one.
\begin{figure}[H]
\centering
\begin{tikzpicture}[scale=0.50]
\draw[<->] (0,8)--(0,0)--(8,0);
\fill (0.1,8) circle (0.1pt) node[above] {$\|u\|_\infty$};
\fill (8,0) circle (0.1pt) node[right] {$\lambda$};
\fill (0,0) circle (2pt) node[below] {$0$};
\fill (6,0) circle (2pt) node[below] {$\lambda^*$};
\node at (5.5,6) {$N=1$};
   \draw[very thick,samples=1000,domain=0.0:6.0, red]
   plot(\x,0);
   \draw[very thick,samples=1000,domain=0.25:6.0, red]
   plot(\x,2/\x-1/3);
\end{tikzpicture}
\caption{}
\label{fig:1}
\end{figure}

\section{Existence and nonexistence of minimal solutions}

Let $N\ge2$ and consider $\Omega\subset\RR^N$ a bounded open set having Lipschitz continuous boundary.
In this section a critical value is found, $\lambda^*>0$ such that a minimal solution to problem \eqref{problem} exists if $\lambda\le\lambda^*$ and no solution exists if $\lambda>\lambda^*$.
This result is proved as a consequence of a criterion to show when the 1--Laplacian equation is solvable (see \cite[Theorem 4.2]{MST1}). In fact, it has the following straightforward consequence.

\begin{proposition}\label{criterio}
   Consider the problem
   \begin{equation}\label{problema-1}
     \left\{\begin{array}{cc}
  -\Delta_1u=\mu &\hbox{in }\Omega\,;\\
  u=0 &\hbox{on }\partial\Omega\,;
 \end{array} \right.
   \end{equation}
   with datum $\mu\in W^{-1,\infty}(\Omega)$.
   \begin{enumerate}
     \item If $\|\mu\|_{W^{-1,\infty}(\Omega)}<1$, then $u \equiv 0$ is a solution
to problem \eqref{problema-1}.
     \item If $\|\mu\|_{W^{-1,\infty}(\Omega)}>1$, then there is not solution to problem \eqref{problema-1}.
   \end{enumerate}
\end{proposition}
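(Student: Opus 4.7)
The plan is to reduce Proposition \ref{criterio} to the observation that problem \eqref{problema-1} admits a solution precisely when $\mu$ can be represented as $-\d\z$ for some $\z\in L^\infty(\Omega;\RR^N)$ with $\|\z\|_\infty\le 1$, and then to invoke the duality identity $\|\mu\|_{W^{-1,\infty}(\Omega)}=\inf\{\|\z\|_{L^\infty}\>:\>\mu=-\d\z,\ \z\in L^\infty(\Omega;\RR^N)\}$ which is precisely the content of \cite[Theorem 4.2]{MST1}.

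For part (1), assuming $\|\mu\|_{W^{-1,\infty}(\Omega)}<1$, I would apply \cite[Theorem 4.2]{MST1} to obtain a vector field $\z\in L^\infty(\Omega;\RR^N)$ with $\|\z\|_\infty<1$ and $-\d\z=\mu$ in $\mathcal D'(\Omega)$, and then verify that the pair $(u,\z)=(0,\z)$ fulfils the version of Definition \ref{defi} adapted to datum $\mu$. Items (1) and (2) hold by construction; item (3), namely $(\z,Du)=|Du|$, is trivial since $Du\equiv 0$; item (4) reduces to $[\z,\nu]\in\sg(0)=[-1,1]$, which follows from the standard trace estimate $\|[\z,\nu]\|_{L^\infty(\partial\Omega)}\le\|\z\|_\infty\le 1$.

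For part (2), I would argue by contradiction. If a solution $u$ existed with associated field $\z$ satisfying $\|\z\|_\infty\le 1$ and $-\d\z=\mu$ in $\mathcal D'(\Omega)$, then testing the distributional equation against $\varphi\in\mathcal C_c^\infty(\Omega)$ would yield
\[
\langle\mu,\varphi\rangle=\io \z\cdot\nabla\varphi\le\|\z\|_\infty\,\|\nabla\varphi\|_{L^1(\Omega)}\le\|\nabla\varphi\|_{L^1(\Omega)},
\]
and by density of $\mathcal C_c^\infty(\Omega)$ in $W^{1,1}_0(\Omega)$ the same bound would persist for every test function $\varphi\in W^{1,1}_0(\Omega)$. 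This forces $\|\mu\|_{W^{-1,\infty}(\Omega)}\le 1$, contradicting the hypothesis.

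The main (and really the only) obstacle is absorbed into \cite[Theorem 4.2]{MST1}: the representation of $\mu\in W^{-1,\infty}(\Omega)$ as $-\d\z$ with sharp control $\|\z\|_{L^\infty}\approx\|\mu\|_{W^{-1,\infty}}$. Everything else is bookkeeping against Definition \ref{defi}. Should a self-contained argument be desired, that representation can be produced directly by defining the linear functional $\nabla\varphi\mapsto\langle\mu,\varphi\rangle$ on the closed subspace $\{\nabla\varphi\>:\>\varphi\in W^{1,1}_0(\Omega)\}\subset L^1(\Omega;\RR^N)$, extending it to the whole of $L^1(\Omega;\RR^N)$ via Hahn--Banach without increasing its norm, and identifying the extension with an element $\z\in L^\infty(\Omega;\RR^N)$ by Riesz representation.
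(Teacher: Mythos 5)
Your argument is correct. Note that the paper itself gives no proof of Proposition \ref{criterio}: it is presented as a ``straightforward consequence'' of \cite[Theorem 4.2]{MST1}, so what you have done is supply the self-contained mechanism that the authors delegate to that citation. Part (2) is the elementary direction and you have it right: conditions (1)--(2) of Definition \ref{defi} give $\langle\mu,\varphi\rangle=\io \z\cdot\nabla\varphi$ for $\varphi\in\mathcal C_c^\infty(\Omega)$, hence $|\langle\mu,\varphi\rangle|\le\|\z\|_\infty\|\nabla\varphi\|_{L^1(\Omega)}\le\|\nabla\varphi\|_{L^1(\Omega)}$ (replace $\varphi$ by $-\varphi$ to pass to the absolute value), and density of $\mathcal C_c^\infty(\Omega)$ in $W_0^{1,1}(\Omega)$ forces $\|\mu\|_{W^{-1,\infty}(\Omega)}\le1$ by \eqref{norm}. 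Part (1) rests on the Hahn--Banach representation $\mu=-\d\z$ with $\|\z\|_{\infty}=\|\mu\|_{W^{-1,\infty}(\Omega)}<1$; this is legitimate because the functional $\nabla\varphi\mapsto\langle\mu,\varphi\rangle$ is well defined on the subspace $\{\nabla\varphi\,:\,\varphi\in W_0^{1,1}(\Omega)\}$ of $L^1(\Omega;\RR^N)$ (Poincar\'e's inequality makes $\varphi\mapsto\nabla\varphi$ injective there), its norm on that subspace is exactly \eqref{norm}, closedness of the subspace being irrelevant for the extension, and $(L^1)^*=L^\infty$ identifies the extension with a field $\z$; verifying Definition \ref{defi} for $u\equiv0$ is then trivial as you say. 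The one point worth flagging is that for a general $\mu\in W^{-1,\infty}(\Omega)$ the divergence $\d\z=-\mu$ need not belong to $L^{N,\infty}(\Omega)$ nor be a measure, so the weak normal trace $[\z,\nu]$ invoked in condition (4) is not literally covered by the Anzellotti-type theory the paper sets up; this is immaterial for the constant data $\mu=\lambda f(0)$ to which the proposition is applied, and the paper is equally silent on it, but a careful write-up should either restrict the class of $\mu$ or state the definition of solution being used for \eqref{problema-1}.
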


A remind is in order: The space $W^{-1,\infty}(\Omega)$ is the dual space of the Sobolev space $W_0^{1,1}(\Omega)$ and its norm is given by
\begin{equation}
\label{norm}  \|\mu\|_{W^{-1,\infty}(\Omega)}
=\sup\left\{\frac{\big|\langle\mu,  v\rangle_{W^{-1,\infty}(\Omega),W_0^{1,1}(\Omega)}\big|}{\int_\Omega|\nabla v|\, dx}\>:\> v\in W_0^{1,1}(\Omega)\backslash\{0\}\right\}\,.
\end{equation}
Applying it to the constant function $\mu \equiv 1$, this expression becomes
\begin{align}\label{norm2}
\nonumber  \|\chi_{\Omega}\|_{W^{-1,\infty}(\Omega)}&=\sup\left\{\frac{\big|\int_\Omega v\, dx\big|}{\int_\Omega|\nabla v|\, dx}\>:\> v\in W_0^{1,1}(\Omega)\backslash\{0\}\right\}\\
  &=\sup\left\{\frac{\int_\Omega |v|\, dx}{\int_\Omega|\nabla v|\, dx}\>:\> v\in W_0^{1,1}(\Omega)\backslash\{0\}\right\}\,.
\end{align}
On the other hand, following the arguments of \cite[Corollary 3.4]{ADS}, it leads to
\begin{equation}\label{norm3}
  \|\chi_{\Omega}\|_{W^{-1,\infty}(\Omega)}
  =\sup\left\{\frac{\int_\Omega |v|\, dx}{\int_\Omega|D v|+\int_{\partial\Omega}|v|\, d\mathcal H^{N-1}}\>:\> v\in BV(\Omega)\backslash\{0\}\right\}\,.
\end{equation}

Our aim is to connect this expression with the Cheeger constant which plays the role of the first eigenvalue of the operator $-\Delta_1$ (see \cite{KF}).
Recall that Cheeger constant of a domain $\Omega$ is defined as
\begin{equation*}
  h(\Omega)=\inf\left\{\frac{P(D)}{|D|}\right\}\,,
\end{equation*}
where the infimum is taken over all nonempty sets of finite perimeter $D\subset\Omega$, and $P(D)$ stands for the perimeter of $D$. As pointed out in \cite{KF}, this constant can be written as
\begin{equation*}
  h(\Omega)=\inf\left\{\frac{\int_\Omega|D v|+\int_{\partial\Omega}|v|\, d\mathcal H^{N-1}}{\int_\Omega |v|\, dx}\>:\> v\in BV(\Omega)\backslash\{0\}\right\}\,.
\end{equation*}
Going back to \eqref{norm3}, it follows that $h(\Omega)=\|\chi_{_{\Omega}}\|_{W^{-1,\infty}(\Omega)}^{-1}$. By the way, as a consequence of \eqref{norm2}, it is straightforward that
\begin{equation*}
  h(\Omega)=\inf\left\{\frac{\int_\Omega|\nabla v|\, dx}{\int_\Omega |v|\, dx}\>:\> v\in W_0^{1,1}(\Omega)\backslash\{0\}\right\}\,.
\end{equation*}

In terms of Cheeger constant, Proposition \ref{criterio} can be written for a constant datum $\lambda$ as follows.
\begin{proposition}\label{criterio2}
   Consider the problem
   \begin{equation}\label{problema-2}
     \left\{\begin{array}{cc}
  -\Delta_1u=\lambda &\hbox{in }\Omega\,;\\
  u=0 &\hbox{on }\partial\Omega\,.
 \end{array} \right.
   \end{equation}
   \begin{enumerate}
     \item If $0<\lambda<h(\Omega)$, then $u \equiv 0$ is a solution
to problem \eqref{problema-2}.
     \item If $\lambda>h(\Omega)$, then there is not solution to problem \eqref{problema-2}.
   \end{enumerate}
\end{proposition}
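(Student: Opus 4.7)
The plan is to derive Proposition \ref{criterio2} as an immediate specialization of Proposition \ref{criterio} once the datum $\mu \equiv \lambda$ is re-expressed in terms of the Cheeger constant. No new solvability argument is needed; the entire content is a translation between norms.

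First, I would observe that for the constant datum $\mu = \lambda \chi_\Omega$, the $W^{-1,\infty}$-norm scales trivially:
\begin{equation*}
\|\lambda\chi_\Omega\|_{W^{-1,\infty}(\Omega)} = \lambda\,\|\chi_\Omega\|_{W^{-1,\infty}(\Omega)}\,,
\end{equation*}
since $\lambda>0$, directly from the definition \eqref{norm}. Then I would invoke the identity $h(\Omega) = \|\chi_\Omega\|_{W^{-1,\infty}(\Omega)}^{-1}$, which was just established in the paragraph preceding the statement (through the chain \eqref{norm}--\eqref{norm3} combined with the BV characterization of the Cheeger constant from \cite{KF}). Consequently,
\begin{equation*}
\|\lambda\chi_\Omega\|_{W^{-1,\infty}(\Omega)} = \frac{\lambda}{h(\Omega)}\,.
\end{equation*}

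Now the two cases follow directly from Proposition \ref{criterio}. For (1), the hypothesis $\lambda < h(\Omega)$ is equivalent to $\|\lambda\chi_\Omega\|_{W^{-1,\infty}(\Omega)} < 1$, so part (1) of Proposition \ref{criterio} supplies a vector field $\z$ with $\|\z\|_\infty \le 1$ and $-\d\, \z = \lambda$ in $\mathcal D'(\Omega)$, showing that $u \equiv 0$ solves \eqref{problema-2} (the remaining conditions of Definition \ref{defi} being trivially met, since $Du = 0$ and $[\z,\nu]\in\sg(-u)=[-1,1]$ is automatic from $\|\z\|_\infty \le 1$). For (2), the hypothesis $\lambda > h(\Omega)$ gives $\|\lambda\chi_\Omega\|_{W^{-1,\infty}(\Omega)} > 1$, and part (2) of Proposition \ref{criterio} rules out any solution.

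Since the argument is just a rescaling of Proposition \ref{criterio} together with an already-recorded identity, there is no real obstacle. The only point I would take slight care with is to note explicitly that $\lambda>0$ allows the constant to be pulled out of the norm and that the identification $h(\Omega)=\|\chi_\Omega\|_{W^{-1,\infty}(\Omega)}^{-1}$ is legitimate (both because $\Omega$ is bounded with Lipschitz boundary, ensuring $h(\Omega)\in(0,\infty)$, and because the equivalence of the two variational expressions for $h(\Omega)$ was justified via \cite{ADS}).
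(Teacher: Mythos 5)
Your proof is correct and follows exactly the route the paper intends: the paper gives no separate argument for Proposition \ref{criterio2}, presenting it as the immediate rewriting of Proposition \ref{criterio} via the identity $h(\Omega)=\|\chi_{\Omega}\|_{W^{-1,\infty}(\Omega)}^{-1}$ and the homogeneity of the norm. Your added care about why $u\equiv 0$ then satisfies all four conditions of Definition \ref{defi} is a harmless (and welcome) elaboration of what the paper leaves implicit.
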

This result is the key to prove the following theorem on minimal solutions to problem \eqref{problem}.

\begin{theorem}\label{trivial}
Fix $N\geq 2$ and let $\Omega\subset\RR^N$ be a bounded open set having Lipschitz continuous boundary. Set $\lambda^*=\frac{h(\Omega)}{f(0)}$, then
   \begin{enumerate}
     \item If $0<\lambda\le\lambda^*$, then $u \equiv 0$ is a solution
to problem \eqref{problem}.
     \item If $\lambda>\lambda^*$, then there is not solution to problem \eqref{problem}.
   \end{enumerate}
\end{theorem}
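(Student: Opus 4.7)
The plan is to reduce the theorem to Proposition \ref{criterio2}, exploiting that $f(0)$ is the minimum of $f$ on $[0,\infty[$. In one direction we use Proposition \ref{criterio2} to produce the vector field witnessing that $u\equiv 0$ solves the problem; in the other direction we use the dual characterisation $h(\Omega)=\|\chi_\Omega\|_{W^{-1,\infty}(\Omega)}^{-1}$ to extract from an alleged solution a competitor for the supremum defining $h(\Omega)$ that forces $\lambda\le\lambda^*$.

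For the nonexistence statement (2), I would argue by contradiction: suppose $u\in BV(\Omega)$ is a solution with $\lambda>\lambda^*$ and pick the associated $\z$ from Definition \ref{defi}, which satisfies $\|\z\|_\infty\le 1$ and $-\d\z=\lambda f(u)$ in $\mathcal D'(\Omega)$. Testing against an arbitrary nonnegative $v\in W_0^{1,1}(\Omega)\setminus\{0\}$ (which is legitimate since $\d\z\in L^{N,\infty}(\Omega)$ and $v\in L^{N/(N-1),1}(\Omega)$) yields
\[
\int_\Omega \lambda f(u)\, v\, dx=\int_\Omega \z\cdot\nabla v\, dx\le\int_\Omega |\nabla v|\, dx.
\]
Since solutions are nonnegative (implicit in the fact that $f$ is only defined on $[0,\infty[$) and $f$ is increasing, $\lambda f(u)\ge \lambda f(0)$ pointwise, whence $\lambda f(0)\int_\Omega v\, dx\le \int_\Omega|\nabla v|\, dx$. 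Taking the supremum over $v\ge 0$ and recalling the identity
\[
\sup_{0\ne v\in W_0^{1,1}(\Omega)}\frac{\int_\Omega v\, dx}{\int_\Omega|\nabla v|\, dx}=\|\chi_\Omega\|_{W^{-1,\infty}(\Omega)}=\frac{1}{h(\Omega)}
\]
(the sup is unchanged by restricting to $v\ge 0$ via the replacement $v\mapsto |v|$), I obtain $\lambda f(0)\le h(\Omega)$, contradicting $\lambda>\lambda^*$.

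For the existence statement (1), if $\lambda<\lambda^*$ then $\lambda f(0)<h(\Omega)$ and Proposition \ref{criterio2}(1) provides a vector field $\z$ with $\|\z\|_\infty\le 1$ and $-\d\z=\lambda f(0)$; since $u\equiv 0$ makes the right-hand side $\lambda f(u)=\lambda f(0)$, the conditions $(\z,Du)=|Du|=0$ and $[\z,\nu]\in \sg(0)=[-1,1]$ are automatic, so $u=0$ is a solution. The borderline case $\lambda=\lambda^*$ is the only real obstacle, because Proposition \ref{criterio2} only treats strict inequalities. I would handle it by approximation: choose $\lambda_n\uparrow\lambda^*$, obtain vector fields $\z_n$ with $\|\z_n\|_\infty\le 1$ and $-\d\z_n=\lambda_n f(0)$, and extract a weak-$*$ limit $\z$ in $L^\infty(\Omega;\RR^N)$ via Banach--Alaoglu. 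The bound $\|\z\|_\infty\le 1$ is preserved by weak-$*$ lower semicontinuity, the distributional identity passes to the limit giving $-\d\z=\lambda^* f(0)$, and the remaining conditions again hold trivially because $u\equiv 0$. The delicate point throughout is checking that no ingredient of Definition \ref{defi} (in particular the weak boundary condition $[\z,\nu]\in \sg(-u)$) requires more than the weak-$*$ limit delivers, but the triviality $u\equiv 0$ collapses this obligation to $\|[\z,\nu]\|_{L^\infty(\partial\Omega)}\le 1$, which follows from $\|\z\|_\infty\le 1$.
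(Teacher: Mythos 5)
Your proof is correct and follows essentially the same route as the paper: part (1) is handled identically (Proposition \ref{criterio2} for $\lambda<\lambda^*$, then a weak-$*$ compactness argument on the associated vector fields to reach the endpoint $\lambda=\lambda^*$), and part (2) differs only in that you inline the proof of Proposition \ref{criterio} by testing the equation against $W_0^{1,1}$ functions directly, whereas the paper simply invokes that proposition after noting $f(u)\ge f(0)$.
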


\begin{proof}
(1) Consider $0<\lambda<\lambda^*$. Applying Proposition \ref{criterio2} to $\lambda f(0)$, it yields that $u \equiv 0$ is a solution
to problem \eqref{problem}. In order to prove that $u \equiv 0$ is a solution for $\lambda^*$ as well, observe that since it is a solution for any $0<\lambda<\lambda^*$, there exist $\z_\lambda\in L^\infty(\Omega;\RR^N)$ satisfying $\|\z_\lambda\|_\infty\le 1$ and $-\d\z_\lambda=\lambda f(0)$ in the sense of distributions. It follows from $\|\z_\lambda\|_\infty\le 1$ for all $\lambda<\lambda^*$ that there exists a sequence $(\lambda_n)_n$ and $\z_{\lambda^*}\in L^\infty(\Omega;\RR^N)$ such that $\lambda_n\to\lambda^*$ and
\begin{equation*}
  \z_{\lambda_n} \rightharpoonup \z_{\lambda^*}\quad\hbox{*--weakly in }L^\infty(\Omega;\RR^N)\,.
\end{equation*}
Obviously, $\|\z_{\lambda^*}\|_\infty\le 1$. Furthermore, since
\begin{equation*}
  \int_\Omega \z_{\lambda_n}\cdot\nabla v\, dx=\lambda_nf(0)\int_\Omega v\, dx
\end{equation*}
holds for every $v\in W_0^{1,1}(\Omega)$ and every $n\in\NN$, it follows that
\begin{equation*}
  \int_\Omega \z_{\lambda^*}\cdot\nabla v\, dx=\lambda^*f(0)\int_\Omega v\, dx
\end{equation*}
holds for every $v\in W_0^{1,1}(\Omega)$.
Therefore, $-\d\z_{\lambda^*}=\lambda^* f(0)$ holds in the sense of distributions and so $u \equiv 0$ is a solution
to problem \eqref{problem} with $\lambda=\lambda^*$.

(2) Take $\lambda>\lambda^*$ and assume that problem \eqref{problem} has a solution $u$. Then $\lambda f(0)>h(\Omega)$ and it follows that $\|\lambda f(u)\|_{W^{-1,\infty}(\Omega)}>1$. Hence, Proposition \ref{criterio} implies that there is not solution to problem \eqref{problem}, which is a contradiction.
\end{proof}

\begin{remark}\rm
It is worth highlighting that since $u \equiv 0$ is solution for $\lambda^*$, then it is bounded without restriction on dimension $N$ (Theorem \ref{unidimensional} and Theorem \ref{trivial}). This is consistent with the results of (\cite[Theorem 1.3]{CCS}) in which the authors establish that the extremal solution to problem \eqref{plaplacian} is bounded if $N<\frac{p^2+3p}{p-1}$, for every $p>1$.
\end{remark}

\begin{remark}\rm
It is not surprising the trivial function is a minimal solution. After all, a minimal solution to p-Laplacian Gelfand problems is obtained as limit of the sequence recursively defined in $W_0^{1,p}(\Omega)$ by
\[\begin{array}{rcl}
u_0&=&0\\
-\Delta_p u_n&=&\lambda f(u_{n-1}), \quad n\ge1\,.
\end{array}\]
In our case, the unique solution to problem
\[-\Delta_1 u=\lambda f(0)\]
is the trivial solution, wherewith the sequence previously considered also provides the minimal solution.
\end{remark}

\section{The radial case}

This section is devoted to fully describe all the radial solutions to problem \eqref{problem} when the domain is the unit ball, $\Omega=B_1(0)$, of $\RR^N,\, N\geq 2$. It will be shown the existence of bounded (regular) and unbounded (singular) solutions. Recall that the Cheeger constant of the unit ball is $h(\Omega)=N$ (see, for instance, \cite{KF}).

Next it is shown how the vector field $\z$ associated to a solution $u$ must be.
Assume that $u$ is not constant in a radial zone, which we describe as $\{\rho_1<|x|<\rho_2\}$ with $0\le \rho_1<\rho_2\le 1$. Then $u(x)=g(|x|)$ for certain nonconstant smooth function $g$ and consequently $\z(x)=\frac{g^\prime(|x|)\ x}{|g^\prime(|x|)|\,|x|}$. The case $g$ increasing in some interval is not possible since it implies $\z(x)=\frac{x}{|x|}$, so that $\lambda f(u)=-\d\z(x)=-\frac{N-1}{|x|}$ is negative; this fact contradicts the search for nonnegative solutions. Hence, $g$ must be decreasing and so $\z(x)=-\frac{x}{|x|}$ in the whole radial zone $\{\rho_1<|x|<\rho_2\}$.

Assume now that $u$ is constant in a radial zone of $B_1(0)$ (either a smaller ball or a ring). Then $-\d\z=\lambda f(u)$ must be constant and so $\z=-Ax$ for some positive $A$. It is easy to find an estimate on $A$ as a consequence of condition $\|\z\|_\infty\le1$. In the case that $u$ be nontrivial and this zone reaches the boundary, the condition $[\z,\nu]=-1$ must be fulfilled; thus $A=1$ and $\z=-x$.

It is worth remarking that there is only one more setup, as shown in the next result.

        \begin{lemma}\label{tech-re}
        Let $u$ be a solution to problem \eqref{problem} with associated vector field $\z$. If $u$ is constant in a radial zone and nonconstant in another, then there exists $0<\rho<1$ such that $u$ is constant in $B_\rho(0)$ and non constant in $B_1( 0)\backslash B_\rho(0)$.
        Furthermore,
        \begin{equation*}
          \z(x)=\left\{\begin{array}{cc}
         \displaystyle -\frac x\rho\,, &\hbox{if }|x|<\rho\,;\\[3mm]
         \displaystyle  -\frac x{|x|}\,, &\hbox{if }\rho<|x|<1\,.
         \end{array} \right.
        \end{equation*}
        \end{lemma}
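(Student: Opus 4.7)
The plan is to classify all possible radial-zone configurations using the two structural facts established immediately before the statement: on every maximal open annular zone where $g(|x|)=u(x)$ is strictly decreasing one has $\z(x)=-x/|x|$, and on every maximal zone where $g$ is constant one has $\z(x)=-Ax$ for some $A>0$ subject to the pointwise bound $Ar\le 1$ throughout the zone (from $\|\z\|_\infty\le 1$).

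The central tool is a matching principle at interfaces. Since $\d\z=-\lambda f(u)\in L^{N,\infty}(B_1(0))$ is an ordinary function with no atomic part on spheres, no Dirac mass is concentrated on any sphere $\{|x|=r\}$ with $0<r<1$, and hence the Anzellotti normal trace $\z\cdot\nu$ is single-valued and continuous across such spheres. The most delicate point I foresee is precisely this continuity, because $u$ (and therefore $f(u)$) may well jump at an interface: the continuity is a feature of $\z$ alone and follows from the absolute continuity of $\d\z$ with respect to Lebesgue measure.

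Using the matching, I will rule out the configuration \emph{nonconstant zone immediately inside a constant zone}. Suppose $g$ is nonconstant on $(r^-,r^*)$ and constant on $(r^*,r^{**})$ with $r^*<r^{**}\le 1$. Equating radial components at $|x|=r^*$ gives $-1=-Ar^*$, so $A=1/r^*$; but then $|\z(x)|=A|x|$ attains the value $Ar^{**}=r^{**}/r^*>1$, violating $\|\z\|_\infty\le 1$. When $r^{**}=1$ the same contradiction persists, since the boundary condition $[\z,\nu]\in\sg(-u)$ still forces $A\le 1$, in conflict with $A=1/r^*>1$. Thus no outward transition nonconstant--then--constant is permitted.

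Since the hypothesis provides at least one constant and one nonconstant zone and such outward transitions are forbidden, the only surviving configuration is a single constant zone containing the origin followed by a single nonconstant zone reaching $\partial B_1(0)$: there is a unique $\rho\in(0,1)$ with $u$ constant on $B_\rho(0)$ and nonconstant on $B_1(0)\backslash B_\rho(0)$. A final application of the matching principle at $|x|=\rho$ forces $A\rho=1$, that is $A=1/\rho$, yielding the explicit formula for $\z$ stated in the lemma.
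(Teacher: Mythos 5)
Your proposal is correct and follows essentially the same route as the paper: the key step in both is that the absence of singular (spherical) parts in $\d\z$ forces continuity of the normal component of $\z$ across an interface, so a nonconstant zone followed outward by a constant zone would force $A=1/r^*$ and hence $|\z(x)|=|x|/r^*>1$ outside, contradicting $\|\z\|_\infty\le1$. The only difference is that you spell out the bookkeeping (merging adjacent constant zones, the boundary case) that the paper leaves implicit.
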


        \begin{proof} Assume that there exist $\epsilon>0$ and $\epsilon<\rho<1$ satisfying
        \begin{enumerate}
          \item $u$ is not constant if $\rho-\epsilon<|x|<\rho$
          \item $u$ is constant if $\rho<|x|<\rho+\epsilon$.
        \end{enumerate}
Then
\begin{enumerate}
          \item $\z(x)= -\frac x{|x|}$  if $\rho-\epsilon<|x|<\rho$
          \item $\z(x)=-Ax$  if $\rho<|x|<\rho+\epsilon$.
        \end{enumerate}
        We deduce that $A=\frac1\rho$ to keep $\z$ continuous, avoiding singular measures of $\d\z$. Hence, for $\rho<|x|<\rho+\epsilon$, we have $|\z(x)|=\frac{|x|}\rho>1$, which contradicts the condition $\|\z\|_\infty\le1$. Therefore, the only possible setup is that stated in this Lemma.
        \end{proof}
        \begin{theorem}\label{radial}
Let $N\geq 2$ and set $\lambda^*=\frac{N}{f(0)}$ and $\overline\lambda=\frac{N-1}{f(0)}$. Then,
\begin{enumerate}
     \item For every $0<\lambda<\lambda^*$ there exists a constant nontrivial solution to problem \eqref{problem}.
     \item For every $0<\lambda\le\overline\lambda$  there exists an unbounded solution to problem \eqref{problem}.
     \item For every $0<\lambda\le\overline\lambda$  there exist infinitely many bounded solutions to problem \eqref{problem}. More precisely, for each value $\alpha\in]f^{-1}(\frac N{\lambda }),+\infty[$, we can find a solution satisfying $\|u\|_\infty=\alpha$.
     \item If $\lambda>\overline\lambda$, then every solution to problem \eqref{problem} is constant.
   \end{enumerate}
   Moreover, only the solutions corresponding to $\lambda=\overline\lambda$ satisfy the Dirichlet condition in the sense of traces.
\end{theorem}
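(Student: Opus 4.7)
The plan is to classify all radial solutions exhaustively using the structural analysis in the paragraphs preceding Lemma \ref{tech-re} together with that lemma itself. On any radial zone where $u$ is nonconstant, the field must be $\z(x)=-x/|x|$ and the equation forces $\lambda f(u)=(N-1)/|x|$; on any radial zone where $u$ is constant, $\z$ must be $-Ax$ for some $A$ determined by continuous matching. Combined with Lemma \ref{tech-re}, every radial solution must therefore fall into one of three categories: (a) a pure constant on $B_1(0)$, (b) the fully nonconstant profile $u(x)=f^{-1}\bigl((N-1)/(\lambda|x|)\bigr)$ throughout $B_1(0)$, or (c) the mixed profile of Lemma \ref{tech-re}, constant on an inner ball $B_\rho(0)$ and nonconstant on the annulus.

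For part (1) I would exhibit the constant $u\equiv f^{-1}(N/\lambda)$ paired with $\z(x)=-x$: we immediately have $\|\z\|_\infty\le 1$ on $\overline{B_1(0)}$, $-\d\z\equiv N=\lambda f(u)$, and $[\z,\nu]=-1\in\sg(-u)$; positivity of $u$ (i.e.\ nontriviality) is equivalent to $\lambda<\lambda^*$. For part (2) I would take $u(x)=f^{-1}\bigl((N-1)/(\lambda|x|)\bigr)$ with $\z(x)=-x/|x|$; since $-\d\z=(N-1)/|x|$ the equation reads $f(u)=(N-1)/(\lambda|x|)$, which lies in $L^{N,\infty}(B_1(0))$ as already noted after Definition \ref{defi}, while the hypothesis $\lambda\le\overline\lambda$ is exactly what forces $u\ge 0$ throughout $B_1(0)$; unboundedness at the origin is clear from the formula.

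For part (3) I would sweep through the mixed profile with parameter $\rho\in (0,1)$: set $u\equiv\alpha:=f^{-1}\bigl(N/(\lambda\rho)\bigr)$ on $B_\rho(0)$ and $u(x)=f^{-1}\bigl((N-1)/(\lambda|x|)\bigr)$ on the annulus, with $\z$ as in Lemma \ref{tech-re}. The divergence identity and $\|\z\|_\infty=1$ are built into the construction, the boundary trace $[\z,\nu]=-1$ holds on $\partial B_1(0)$, and $u\ge 0$ uses once more $\lambda\le\overline\lambda$. The delicate point is verifying $(\z,Du)=|Du|$ across the jump sphere $\{|x|=\rho\}$: since $\z$ is continuous there with $\z\cdot\nu=-1$ for $\nu$ the outer radial normal and the inward--outward jump $\alpha-f^{-1}((N-1)/(\lambda\rho))$ is strictly positive, the Anzellotti pairing on the jump equals $(\alpha-f^{-1}((N-1)/(\lambda\rho)))\mathcal{H}^{N-1}$, matching $|Du|$ on the jump set. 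As $\rho$ runs through $(0,1)$, the maximum $\alpha$ runs precisely through $\,]f^{-1}(N/\lambda),+\infty[\,$.

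Part (4) falls out of the same classification: for $\lambda>\overline\lambda$ the identity $f(u)=(N-1)/(\lambda|x|)$ at $|x|=1$ would demand $f(u)<f(0)$, forcing $u<0$, so profiles (b) and (c) are excluded and only constant solutions remain. Finally, the \emph{moreover} statement is read off by inspection of the trace at $|x|=1$ in each case: the nontrivial constants of (1) have trace $f^{-1}(N/\lambda)>0$, whereas the nonconstant solutions of (2) and (3) have trace $f^{-1}((N-1)/\lambda)$, which vanishes if and only if $\lambda=\overline\lambda$. I expect the main obstacle to be the careful verification of the Anzellotti jump pairing in part (3) and, in the construction of the unbounded solution in (2), confirming $u\in BV(B_1(0))$ from the explicit profile; both rely on the extended $L^{N,\infty}$ divergence-measure theory developed in Section 2.
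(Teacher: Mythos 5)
Your proposal is correct and follows essentially the same route as the paper: classify radial solutions via the structural analysis preceding Lemma \ref{tech-re} into constant, fully nonconstant, and mixed profiles, then read off existence, the sweep of $\|u\|_\infty$ over $\,]f^{-1}(N/\lambda),+\infty[\,$ via the parameter $\rho$, the nonexistence for $\lambda>\overline\lambda$ from the boundary behaviour $f(u)=(N-1)/(\lambda|x|)\ge f(0)$, and the trace condition $f^{-1}((N-1)/\lambda)=0$ iff $\lambda=\overline\lambda$. Your explicit verification of the Anzellotti pairing across the jump sphere in part (3) is a welcome detail the paper leaves as ``straightforward.''
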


\begin{proof}
(1) Fix $0<\lambda<\lambda^*$. Observe that if $u$ is a constant nontrivial solution, then $\z=-x$ wherewith $-\d\z=N$. It follows from $N=\lambda f(u)$ that $\displaystyle u(x)=f^{-1}\left(\frac N\lambda\right)$.

Notice that for $\lambda=\lambda^*$, it follows that $u(x)=f^{-1}\left(f(0)\right)$ recovering the minimal solution.

(2) Fix $0<\lambda\le\overline\lambda$ and let $u$ be a non constant solution. Assume that there is not radial zone where $u$ is constant. Then $\z(x)=-\frac{x}{|x|}$ and $\lambda f(u)=-\d\z(x)=\frac{N-1}{|x|}$.
  Therefore, $u(x)=f^{-1}\left(\frac {N-1}{\lambda |x|}\right)$. It is obvious that $u$ is an unbounded solution.

  (3) Fix $0<\lambda\le\overline\lambda$ and assume that $u$ is a solution which is constant in a radial zone and nonconstant in another. By Lemma \ref{tech-re}, there exists  $0<\rho<1$ such that
        \begin{equation*}
          \z(x)=\left\{\begin{array}{cc}
        \displaystyle   -\frac x\rho\,, &\hbox{if }|x|<\rho\,;\\[3mm]
        \displaystyle   -\frac x{|x|}\,, &\hbox{if }\rho<|x|<1\,.
         \end{array} \right.
        \end{equation*}
        It is now straightforward from this expression that
          \begin{equation*}
          u(x)=\left\{\begin{array}{cc}
         \displaystyle  f^{-1}\left(\frac N{\lambda \rho}\right)\,, &\hbox{if }|x|<\rho\,;\\[3mm]
         \displaystyle  f^{-1}\left(\frac{N-1}{\lambda |x|}\right)\,, &\hbox{if }\rho<|x|<1\,.
         \end{array} \right.
        \end{equation*}
        Observe that $u$ is a bounded solution satisfying $\displaystyle \|u\|_\infty=f^{-1}\left(\frac N{\lambda \rho}\right)$.
        Since $0<\rho<1$, it follows that $\|u\|_\infty$ can take every value of the interval $\displaystyle \Big]f^{-1}\left(\frac N{\lambda}\right),+\infty\Big[$. Notice that $u$ is a discontinuous solution.

  (4) Let $0<\lambda<\lambda^*$. By Lemma \ref{tech-re}, if there exists a non constant solution to problem \eqref{problem}, then there exists $\epsilon>0$ such that $ \z(x)=-\frac x{|x|}$ if $1-\epsilon<|x|<1$. It yields
  $u(x)=f^{-1}\left(\frac{N-1}{\lambda |x|}\right)$ if $1-\epsilon<|x|<1$. Thus, $\frac{N-1}{\lambda |x|}=f(u(x))\ge f(0)$ for all $1-\epsilon<|x|<1$, from where $\frac{N-1}{\lambda}\ge f(0)$ follows.
  Therefore, $\lambda\le \frac{N-1}{f(0)}=\overline\lambda$.

To check the last claim of Theorem \ref{radial}, note that $u(x)=0$ for $|x|=1$ implies $f^{-1}\left(\frac {N-1}{\lambda}\right)=0$ and as a consequence $\lambda=\overline\lambda$.
\end{proof}

The different types of bounded solutions established in Theorem \ref{trivial} and Theorem \ref{radial} can be seen in the following Figure \ref{fig:2}. Note that the blue-colored zone corresponds to the discontinuous solutions with $0 < \lambda \leq \overline\lambda$, of type (3) of the previous theorem. When $\lambda=\overline \lambda=\frac{N-1}{f(0)}$,
the Dirichlet condition in the sense of traces holds: $u(x) = 0$ for $|x|=1$. We will prove in Section 6 that the red-colored continuum corresponds to solutions which are limit of solutions to the corresponding Gelfand problems driven by the $p$--Laplacian.
\begin{figure}[H]
\centering
\begin{tikzpicture}[scale=0.70]
\draw[<->] (0,8)--(0,0)--(8,0);
\fill (0.1,8) circle (0.1pt) node[above] {$\|u\|_\infty$};
\fill (8,0) circle (0.1pt) node[right] {$\lambda$};
\fill (0,0) circle (2pt) node[below] {$0$};
\fill (6,0) circle (2pt) node[below] {$\lambda^*$};
\fill (3,0) circle (2pt) node[below] {$\overline\lambda$};
\node at (5.5,6) {$N\geq 2$};
   \draw[very thick,samples=1000,domain=0.0:6.0, red]
   plot(\x,0);
      \draw[very thick, blue]   (3,0.5)--(3,7.85);
     \draw[very thick,dashed, blue] (2.8,7.85)--(2.8,0.57);
     \draw[very thick,dashed, blue] (2.6,7.85)--(2.6,0.65);
     \draw[very thick,dashed, blue] (2.4,7.85)--(2.4,0.75);
     \draw[very thick,dashed, blue] (2.2,7.85)--(2.2,0.86);
     \draw[very thick,dashed, blue] (2,7.85)--(2,1);
     \draw[very thick,dashed, blue] (1.8,7.85)--(1.8,1.17);
     \draw[very thick,dashed, blue] (1.6,7.85)--(1.6,1.37);
     \draw[very thick,dashed, blue] (1.4,7.85)--(1.4,1.64);
     \draw[very thick,dashed, blue] (1.2,7.85)--(1.2,2);
     \draw[very thick,dashed, blue] (1,7.85)--(1,2.5);
     \draw[very thick,dashed, blue] (0.8,7.85)--(0.8,3.25);
     \draw[very thick,dashed, blue] (0.6,7.85)--(0.6,4.5);
     \draw[very thick,dashed, blue] (0.4,7.85)--(0.4,7);
      \draw[very thick,samples=1000,domain=0.36:6.0, red]
   plot(\x,3/\x-1/2);
       \draw[dashed]  (3,0)--(3,0.5);
\end{tikzpicture}
    \caption{ }
    \label{fig:2}
    \end{figure}

    \begin{remark}\rm
    When $f$ is a bounded function some changes are necessary. One important feature is that no unbounded solution can be found. Consider that the image of $f$ is the interval $[f(0), A[$. Then Theorem \ref{radial} becomes
       \begin{enumerate}
     \item For every $\frac NA<\lambda<\lambda^*$ there exists a constant nontrivial solution to problem \eqref{problem}.
     \item For every $\frac {N-1}A<\lambda\le\overline\lambda$  there exist infinitely many bounded solutions to problem \eqref{problem}. More precisely, for each value $\alpha\in]f^{-1}(\frac N{\lambda }),+\infty[$, we can find a solution satisfying $\|u\|_\infty=\alpha$.
     \item If $\lambda>\overline\lambda$, then every solution to problem \eqref{problem} is constant.
   \end{enumerate}
    \end{remark}

\section{Connection with Gelfand type problems involving the $p$--Laplacian operator}

In this section we are showing when the Gelfand problem for the 1--Laplacian can be seen as the limit of Gelfand problems for the $p$--Laplacian as $p$ goes to 1. Thus, in what follows, we assume \eqref{hipotesis} for all $p$ close enough to 1. We analyze the general case and two aspects of the convergence of radial solutions.

We begin by introducing some notation.
Let $p>1$. We will write $(\lambda, u)\in \GG_p$ if $\lambda>0$ and $u\in W_0^{1,p}(\Omega)$ is a solution to problem \eqref{plaplacian}.

We will write $(\lambda, u)\in \GG$ if $\lambda>0$ and $u\in BV(\Omega)$ and there exist sequences $(p_n)_n$, $(\lambda_n)_n$ and $(u_n)_n$ satisfying
\begin{enumerate}
  \item $p_n\to1$
  \item $\lambda_n\to \lambda$
  \item $u_n(x)\to u(x)$ a.e. in $\Omega$
  \item $(\lambda_n, u_n)\in \GG_{p_n}$ for all $n\in\mathbb N$
\end{enumerate}

Next result shows that $(\lambda, u)\in\GG$ implies that $u$ is a solution to problem \eqref{problem}.

\begin{proposition}\label{limit}
Let $(\lambda_p, u_p)\in \GG_p$ and assume that there exists $g\in L^1(\Omega)$ satisfying $f(u_p)u_p\le g$ for all $p>1$. If $\lambda_p\to \lambda$, then (up to subsequences)
\begin{enumerate}
  \item $u_p\to u$ strongly in $L^1(\Omega)$
  \item $f(u_p)u_p\to f(u)u$ strongly in $L^1(\Omega)$
  \item $f(u_p)\to f(u)$ strongly in $L^1(\Omega)$
  \item $F(u_p)\to F(u)$ strongly in $L^1(\Omega)$
  \item Function $u$ is a solution to problem \eqref{problem}
  \item $\displaystyle \int_\Omega\varphi|Du|=\liminf_{p\to1}\int_\Omega\varphi|\nabla u|^pdx$ for all nonnegative $\varphi\in C_0^\infty(\Omega)$.
\end{enumerate}
\end{proposition}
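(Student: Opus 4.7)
My plan is to use the $p$-Laplacian equation twice---testing with $u_p$ to get a priori estimates, and with $u_p\varphi$ to unlock the energy identity $(\z,Du)=|Du|$---then extract limits via BV compactness and weak-$L^q$ compactness of the vector fields $\z_p:=|\nabla u_p|^{p-2}\nabla u_p$, and read off the four conditions of Definition \ref{defi} by comparing with Anzellotti's Green formula. Testing \eqref{plaplacian} against $u_p$ itself gives $\io|\nabla u_p|^p\,dx=\lambda_p\io f(u_p)u_p\,dx\le \lambda_p\|g\|_{L^1}$, and Young's inequality $t\le t^p/p+(p-1)/p$ upgrades this to a uniform $W_0^{1,1}(\Omega)\hookrightarrow BV(\Omega)$ bound; compact embedding into $L^1$ then gives (1) along a subsequence, with $u\ge 0$ since $u_p>0$ by the $p$-Laplacian strong maximum principle.

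For (2)--(4), since $f$ is positive and increasing, $f(s)\le f(1)+sf(s)$ for every $s\ge 0$, so $f(u_p)\le f(1)+g$; similarly $F(s)\le sf(s)\le g$. With these three common dominating envelopes, a.e.\ convergence and continuity of $f$ and $F$, dominated convergence yields all three $L^1$-convergences. For the vector field, fix $q\ge 1$ and take $p$ close enough to $1$ that $q(p-1)\le p$; then
\[
\io|\z_p|^q\,dx=\io|\nabla u_p|^{q(p-1)}\,dx\le|\Omega|+\io|\nabla u_p|^p\,dx\le C.
\]
A diagonal extraction produces $\z\in\bigcap_{q<\infty}L^q(\Omega;\RR^N)$ with $\z_p\rightharpoonup \z$ in each $L^q$. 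Lower semicontinuity of $\|\cdot\|_{L^q}$ combined with $q\to\infty$ gives $\|\z\|_\infty\le 1$, and passing to the limit in $\io\z_p\cdot\nabla\varphi=\lambda_p\io f(u_p)\varphi\,dx$ for $\varphi\in\mathcal{C}_c^\infty(\Omega)$ yields $-\d\z=\lambda f(u)$ distributionally. These are conditions (1) and (2) of Definition \ref{defi}.

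The delicate step, which I expect to be the main obstacle, is proving $(\z,Du)=|Du|$ as measures. Testing the $p$-equation with $u_p\varphi$ for nonnegative $\varphi\in\mathcal{C}_c^\infty(\Omega)$ gives
\[
\io\varphi|\nabla u_p|^p\,dx=\lambda_p\io f(u_p)u_p\varphi\,dx-\io u_p\,\z_p\cdot\nabla\varphi\,dx.
\]
Strong $L^m$-convergence of $u_p$ for any $m<N/(N-1)$ and weak $L^{m'}$-convergence of $\z_p$ pass to the limit in the cross term; combined with (2), the right-hand side converges to $\lambda\io f(u)u\varphi\,dx-\io u\,\z\cdot\nabla\varphi\,dx$, which by the extended Green formula \eqref{green-anze} equals $\io\varphi(\z,Du)$. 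Young's inequality and the lower semicontinuity of $u\mapsto\io\varphi|Du|$ give
\[
\io\varphi|Du|\le\liminf_{p\to 1}\io\varphi|\nabla u_p|\,dx\le\liminf_{p\to 1}\frac{1}{p}\io\varphi|\nabla u_p|^p\,dx,
\]
while \eqref{Borel} and $\|\z\|_\infty\le 1$ give the reverse inequality $\io\varphi(\z,Du)\le\io\varphi|Du|$. The resulting chain of equalities proves both condition (3) of Definition \ref{defi} and item (6) of the Proposition.

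For the boundary condition (4), Anzellotti's Green formula \eqref{Green-2} applied to the pair $(\z,u)$, together with lower semicontinuity of the full BV-norm $\io|Du|+\ido|u|\,d\mathcal{H}^{N-1}$ (valid since $u_p\in W_0^{1,p}(\Omega)$ has zero trace), yields $\ido u(1+[\z,\nu])\,d\mathcal{H}^{N-1}\le 0$; since $u\ge 0$ and $[\z,\nu]\ge -1$, both factors are nonnegative, forcing $u(1+[\z,\nu])=0$ $\mathcal{H}^{N-1}$-a.e., i.e.\ $[\z,\nu]\in\sg(-u)$. A secondary subtlety is verifying $f(u)\in L^{N,\infty}(\Omega)$ so that the extended Anzellotti framework of Section 2 applies directly; this should be recovered from the uniform $L^q$-bounds on $\z_p$ via the divergence constraint $-\d\z_p=\lambda_p f(u_p)$, yielding the required Marcinkiewicz regularity of the limiting divergence.
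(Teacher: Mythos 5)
Your argument is essentially the paper's proof, written out in more detail: the same a priori estimate from testing with $u_p$, the same Young's inequality and $BV$--compactness for (1), the same domination arguments $f(u_p)\le f(1)+g$ and $F(u_p)\le f(u_p)u_p\le g$ for (2)--(4), the same construction of $\z$ (your explicit $L^q$ bounds on $|\nabla u_p|^{p-2}\nabla u_p$ are exactly what the cited procedure of \cite{MRST} provides), the same test function $u_p\varphi$ combined with lower semicontinuity to force the chain of equalities giving $(\z,Du)=|Du|$ and item (6), and the standard Green--formula argument for the weak boundary condition, which the paper only invokes as ``the usual method.'' The one step that does not hold up is your closing remark: the uniform $L^q$ bounds on $\z_p$ together with $-\d\,\z=\lambda f(u)$ and $\|\z\|_\infty\le1$ only yield a Morrey-type estimate $\int_{B_r}f(u)\,dx\le Cr^{N-1}$, which does not control the $L^{N,\infty}$ quasinorm (the family $\varepsilon^{-1}\chi_{\{|x_1|<\varepsilon\}}$ satisfies such a bound uniformly while its $L^{N,\infty}$ quasinorm blows up). Under the stated hypotheses one only obtains $f(u)\in L^1(\Omega)$; the pairing $(\z,Du)$ is still meaningful here because $uf(u)\le g\in L^1(\Omega)$ makes the defining distribution \eqref{green-anze} finite, but the requirement $f(u)\in L^{N,\infty}(\Omega)$ of Definition \ref{defi} is a point the paper's sketch also leaves unaddressed and verifies only for the explicit radial solutions, where $f(u)=\frac{N-1}{\lambda|x|}$.
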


\begin{proof} Only the case $N\ge2$ will be proved, the case $N=1$ can be handled with minor modifications. We only sketch the proof since it is well--known (see \cite{ABCM2}). Observe that, taking $u_p$ as test function in the $p$--problem \eqref{plaplacian}, the boundedness of the family $\big(f(u_p)u_p\big)_p$ implies that
\[\int_\Omega|\nabla u_p|^p dx\le \lambda_p \int_\Omega g\, dx\le C\]
for some positive constant $C$ which does not depend on $p$. Having in mind that $u_p\big|_{\partial\Omega}=0$ and applying Young's inequality, we deduce that $(u_p)_p$ is bounded in $BV(\Omega)$. Hence, up to a subsequence, \cite[Theorem 3.49]{AFP} yields
\begin{itemize}
  \item $u_p\to u$ strongly in $L^r(\Omega)$ for all $1\le r<\frac N{N-1}$
  \item $u_p(x)\to u(x)$ a.e. in $\Omega$
\end{itemize}
Moreover, as a consequence of our assumption $f(u_p)u_p\le g$, we also obtain
\begin{itemize}
  \item $f(u_p)u_p\to f(u)u$ strongly in $L^1(\Omega)$.
\end{itemize}
Noting that
\[f(u_p)=f(u_p)\chi_{\{u_p\le 1\}}+f(u_p)\chi_{\{u_p> 1\}}\le f(1)+f(u_p)u_p\chi_{\{u_p> 1\}}\le f(1)+g\,,\]
a further consequence is
\begin{itemize}
  \item $f(u_p)\to f(u)$ strongly in $L^1(\Omega)$.
\end{itemize}
Yet another consequence is
\begin{itemize}
  \item $F(u_p)\to F(u)$ strongly in $L^1(\Omega)$
\end{itemize}
which follows from the monotonicity of $f$, in fact:
\[F(s)=\int_0^sf(\sigma)\, d\sigma\le f(s)s\qquad\hbox{for all }s\ge0\,.\]

On the other hand, we may apply the procedure of \cite[Theorem 3.5]{MRST} to obtain a bounded vector field $\z$ such that $\|\z\|_\infty\le1$ and
\begin{itemize}
  \item $|\nabla u_p|^{p-2}\nabla u_p \rightharpoonup \z$ weakly in $L^s(\Omega)$ for all $1\le s<\infty$
\end{itemize}
The above convergences are enough to pass to the limit in the $p$--problems and get
\[-\textrm{div\,}\z=\lambda f(u)\qquad \hbox{in }\mathcal D'(\Omega).\]

In order to check that $(\z, Du)=|Du|$ as measures, fix $\varphi\in C_0^\infty(\Omega)$ such that $\varphi\ge0$ and take $\varphi u_p$ as test function in \eqref{plaplacian}. Then
\[\int_\Omega\varphi|\nabla u_p|^pdx=-\int_\Omega u_p|\nabla u_p|^{p-2}\nabla u_p\cdot\nabla \varphi\, dx+\lambda_p\int_\Omega f(u_p)u_p\varphi\, dx\]
and, by Young's inequality,
\begin{align*}
  \int_\Omega\varphi|\nabla u_p|\, dx & \le \frac1p \int_\Omega\varphi|\nabla u_p|^pdx+\frac{p-1}p\int_\Omega\varphi\, dx \\
  &=-\frac1p \int_\Omega u_p|\nabla u_p|^{p-2}\nabla u_p\cdot\nabla \varphi\, dx+\frac{\lambda_p}p\int_\Omega f(u_p)u_p\varphi\, dx \\
  & \hspace{7,5cm}+\frac{p-1}p\int_\Omega\varphi\, dx\,.
\end{align*}
Owing to the lower semicontinuity on the left hand side and the convergences we have already proven on the right hand side, we may pass to the limit and obtain
\begin{align*}
  \int_\Omega\varphi|D u| & \le\liminf_{p\to1}\int_\Omega\varphi|\nabla u_p|^pdx
  \\
  &=- \int_\Omega u\z \cdot\nabla \varphi\, dx+\lambda\int_\Omega f(u)u\varphi\, dx \\
  &=- \int_\Omega u\z \cdot\nabla \varphi\, dx-\int_\Omega u\,\varphi \, \textrm{div\,}\z\, dx
  \\
  &=\int_\Omega\varphi (\z, Du)\,.
\end{align*}
Since $\int_\Omega\varphi (\z, Du)\le \int_\Omega\varphi|D u|$ always holds, due to $\|\z\|_\infty\le1$, we deduce that the above inequalities become identities. Therefore, $\int_\Omega\varphi|D u|=\int_\Omega\varphi (\z, Du)$ and $\int_\Omega\varphi|D u|=\liminf_{p\to1}\int_\Omega\varphi|\nabla u_p|^pdx$ hold for every $\varphi\in C_0^\infty(\Omega)$.

The boundary condition follows by applying the usual method.
\end{proof}

\begin{remark}\rm
It is straightforward that Proposition \ref{limit} holds if $\lambda_p\to \lambda$ and $(\| u_p \|_\infty)_p$ is bounded.
As a consequence of assertion (1) of Proposition \ref{limit}, we then obtain that $u\in L^\infty(\Omega)$ and
$\| u \|_\infty\le \liminf_{p\to1}\| u_p \|_\infty$.
\end{remark}

\subsection{A necessary condition to obtain the solution as a limit of p-Laplacian type solutions}

From now on, we will focus on the case $N\ge2$ and $\Omega=B_1(0)$, the unit ball, and radial solutions will be analyzed. However, we are include $N=1$ in Theorem \ref{limit2}. Observe that regular solutions are radially decreasing (see \cite{Da}). In this framework, solutions to $\eqref{plaplacian}$ for $p>1$ must satisfy the following quasilinear elliptic equation:
\begin{equation}\label{BVP}
\left\{
\begin{array}{lc}
r^{1-N}\left(r^{N-1}|v'|^{p-2}v'  \right)'+\lambda f(v)=0,& r \hbox{ in } (0,1),
\\[3mm]
v>0, & r \hbox{ in } (0,1),
\\[3mm]
v(1)=0, &
\end{array}
\right.
\end{equation}
being $v(r)=u(|x|)$. We point out that $v\in \mathcal{C}([0,1])$ is a solution of \eqref{BVP} if and only if $v$ is a solution of the integral equation
\begin{equation}\label{gador}
v(r)=\int_r^1 \left[ \frac{\lambda}{t^{N-1}}\int_0^t s^{N-1}f(v(s))ds  \right]^{\frac{1}{p-1}}dt
\end{equation}
which satisfies  $v'(r)<0$ in $(0,1)$ and $v'(0)=0$. Observe that $\|v\|_\infty=v(0):=\alpha>0$.

To solve problem \eqref{BVP}, the following system must be analyzed
\begin{equation}\label{system}
\left\{
\begin{array}{l}
|v'|^{p-2}v'=w,
\\[4mm]
\displaystyle w'=-\frac{N-1}r w-\lambda f(v)
\\[5mm]
v(0)=\alpha, \quad w(0)=0,
\end{array}
\right.
\end{equation}
where $\alpha>0$ is chosen in such a way we get $v(1)=0$. It is then convenient to consider an energy functional:
\begin{equation}\label{energy}
  E(v,w)=\frac1{p'}|w|^{p'}+\lambda F(v),
\end{equation}
whose derivative along trajectories is given by
\[\frac d{dr}E(v,w)=-\frac{N-1}r|w|^{p'}=-\frac{N-1}r|v'|^{p}.\]

\begin{proposition}\label{limit1}
Let $(v,w,\lambda) = (v_p,w_p,\lambda_p)$ be a solution to \eqref{system}, and assume that $\lambda_p\to \lambda_1$ and $(\|v_p\|_\infty)_p$ is bounded.

Then, up to a subsequence, the following properties hold.
\begin{enumerate}
  \item  $(v_p)_p$ converges strongly in $L^s((0,1);r^{N-1}dr)$ to $v_1$, for every $1\le s<\infty$.
  \item  $(w_p)_p$ converges weakly in $L^s((0,1);r^{N-1}dr)$ to $w_1$, for every $1\le s<\infty$.
  Furthermore, $w_1\in L^\infty(0,1)$ with $\|w_1\|_\infty\le1$.
  \item  $(f(v_p)v_p)_p$ converges strongly in $L^s((0,1);r^{N-1}dr)$ to $f(v_1)v_1$, for every $1\le s<\infty$.
  \item  $(f(v_p))_p$ converges strongly in $L^s((0,1);r^{N-1}dr)$ to $f(v_1)$, for every $1\le s<\infty$.
  \item  $(F(v_p))_p$ converges strongly in $L^s((0,1);r^{N-1}dr)$ to $F(v_1)$, for every $1\le s<\infty$.
  \item $v_1\in BV(\sigma,1)$ for every $\sigma>0$.
    \item $w_1$ is Lipschitz--continuous in $(\sigma, 1)$ for every $\sigma>0$.
  \item $-w_1'-\frac{N-1}tw_1=\lambda_1 f(v_1)$ in the sense of distributions.
  \item $|v_1'|=(w_1,v'_1)$ as measures.
  \item The identity
  \begin{equation}\label{clau}
\lambda_1\frac {dF(v_1)}{dr} = -\frac{N-1}r\left|\frac {dv_1}{dr}\right|
\end{equation}
holds in the sense of distributions.
\end{enumerate}
\end{proposition}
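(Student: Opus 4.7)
My plan is to adapt the strategy of Proposition~\ref{limit} to the radial ODE system \eqref{system}, exploiting the pointwise energy identity $\frac{d}{dr}E_p(v_p,w_p)=-\frac{N-1}{r}|v_p'|^p$ available here. First, I would test \eqref{system} by $v_p$ and use the uniform bound on $\|v_p\|_\infty$ (hence on $f(v_p)v_p$) to derive the radial energy estimate $\int_0^1 |v_p'|^p r^{N-1}\,dr\le C$. Since $r^{N-1}\ge\sigma^{N-1}$ on $[\sigma,1]$, Young's inequality converts this into a uniform $BV(\sigma,1)$ bound on $v_p$ for every $\sigma>0$; a diagonal argument extracts a subsequence converging a.e.\ on $(0,1)$ to some $v_1\in BV(\sigma,1)$, proving \textbf{(6)} and, via dominated convergence, \textbf{(1)}. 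Boundedness and continuity of $f,F$ on the range of $v_p$ upgrade this to \textbf{(3)}--\textbf{(5)}.

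Next, for \textbf{(2)}, \textbf{(7)} and \textbf{(8)}, integrating $(r^{N-1}w_p)'=-\lambda_pr^{N-1}f(v_p)$ from $0$ with $w_p(0)=0$ yields the representation
\[
w_p(r)=-\frac{\lambda_p}{r^{N-1}}\int_0^r s^{N-1}f(v_p(s))\,ds.
\]
Combined with \textbf{(4)}, dominated convergence forces pointwise convergence of $w_p$ to $w_1(r)=-\lambda_1 r^{1-N}\int_0^r s^{N-1}f(v_1(s))\,ds$, a function that is manifestly Lipschitz on every $(\sigma,1)$, giving \textbf{(7)}, and satisfies the ODE in \textbf{(8)} by direct differentiation. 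For the $L^\infty$ bound $\|w_1\|_\infty\le 1$, the identity $\int_0^1|w_p|^{p'}r^{N-1}\,dr=\int_0^1|v_p'|^p r^{N-1}\,dr\le C$ combined with Hölder with respect to $r^{N-1}\,dr$ yields $\|w_p\|_{L^s(r^{N-1}dr)}\le C^{1/p'}N^{-(1/s-1/p')}$ for each $s<p'$; letting $p\to 1$ at fixed $s$ and then $s\to\infty$ forces $\|w_1\|_\infty\le 1$. Weak convergence in every $L^s(r^{N-1}dr)$ then follows by standard diagonal extraction.

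For \textbf{(9)}, I would mimic the last step of the proof of Proposition~\ref{limit}: test the radial ODE by $r^{N-1}\varphi v_p$ with nonnegative $\varphi\in C_0^\infty((0,1))$, use Young's inequality on the resulting $|v_p'|^p$ term, and pass to the liminf on the left (radial lower semicontinuity) and to the limit on the right (invoking \textbf{(2)}--\textbf{(4)} together with the one-dimensional Anzellotti-type Green formula that follows from \textbf{(8)}). This produces $\int r^{N-1}\varphi|v_1'|\le\int r^{N-1}\varphi\,(w_1,v_1')$; the reverse inequality is automatic from $\|w_1\|_\infty\le 1$, so \textbf{(9)} holds as equality of measures on $(0,1)$.

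Finally, the crucial identity \textbf{(10)} I would prove by integrating the energy identity against a nonnegative $\varphi\in C_0^\infty((0,1))$, obtaining
\[
\int_0^1\Big(\tfrac1{p'}|w_p|^{p'}+\lambda_p F(v_p)\Big)\varphi'\,dr=\int_0^1\tfrac{N-1}{r}|v_p'|^p\varphi\,dr.
\]
The first term on the left is $O(1/p')$ in $L^1$ (since $|w_p|^{p'}=|v_p'|^p$ is bounded in $L^1(r^{N-1}dr)$ and $r^{N-1}$ is bounded from below on $\mathrm{supp}\,\varphi$) and so vanishes in the limit; the second tends to $\lambda_1\int F(v_1)\varphi'\,dr$ by \textbf{(5)}. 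To identify the limit of the right-hand side, I would apply item (6) of Proposition~\ref{limit} to the smooth radial test function $\Phi(x)=\frac{N-1}{|x|^N}\varphi(|x|)\in C_0^\infty(B_1(0)\setminus\{0\})$, producing
\[
\int_0^1\tfrac{N-1}{r}\varphi|v_1'|\,dr=\liminf_{p\to 1}\int_0^1\tfrac{N-1}{r}|v_p'|^p\varphi\,dr.
\]
These together yield $\lambda_1\int F(v_1)\varphi'\,dr=\int\tfrac{N-1}{r}\varphi|v_1'|\,dr$ for every nonnegative $\varphi\in C_0^\infty((0,1))$. Writing an arbitrary test function as $\varphi=(\varphi+M\eta)-M\eta$ for a suitable bump $\eta\ge 0$ and large $M$ extends the identity to all of $C_0^\infty((0,1))$, which is exactly \eqref{clau}. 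The main obstacle, in my view, is reconciling the degeneracy $1/p'\to 0$ with the fact that item (6) of Proposition~\ref{limit} provides only a $\liminf$ inequality; this $\liminf$ promotes itself to a genuine limit here precisely because the left-hand side of the energy identity does converge.
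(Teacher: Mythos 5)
Your proposal is correct and follows essentially the same route as the paper: the energy estimate and $BV(\sigma,1)$ compactness for (1)--(6), the recognition of $\frac{d}{dr}E_p$ on the right-hand side, the vanishing of the $\frac{1}{p'}|w_p|^{p'}$ term, and the use of item (6) of Proposition \ref{limit} with the radial test function $\frac{N-1}{|x|^N}\varphi(|x|)$ to identify the limit in (10) are exactly the paper's argument. The only (harmless, arguably cleaner) deviation is that you derive (2), (7) and (8) from the explicit representation $w_p(r)=-\lambda_p r^{1-N}\int_0^r s^{N-1}f(v_p(s))\,ds$, whereas the paper tests the $N$--dimensional distributional identity with $\psi(|x|)\,|x|^{1-N}$ and passes to polar coordinates.
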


\begin{proof}
The proof is similar (but easier) to the proof of \cite[Proposition 16]{SS}, so that we do not provide all details.
The idea for seeing (1)--(5) is to apply Proposition \ref{limit} in a radially symmetric setting and pass to polar coordinates.

\noindent
{\it Proof of 6).}  \quad   \quad

In Proposition \ref{limit} we have got the estimate $\int_{B_1(0)}|\nabla u_p(x)|^p\, dx\le C$, with
$C$ non depending on $p$. Fixed $\sigma>0$, this estimate also holds over
$$
B_1(0)\backslash\overline B_\sigma(0)\,.
$$
Then Young's inequality implies
\begin{align*}
\int_{B_1(0)\backslash\overline B_\sigma(0)}|\nabla u_p(x)|\, dx & \le\frac1p\int_{B_1(0)\backslash\overline B_\sigma(0)}|\nabla u_p(x)|^p\, dx+\frac{p-1}p\big|B_1(0)\big|\\
 & \le C+|B_1(0)|.
\end{align*}
Thus the lower semicontinuity of the total variation yields
\begin{equation*}
  \int_{B_1(0)\backslash\overline B_\sigma(0)}|Du_1|\le \liminf_{p\to\infty}\int_{B_1(0)\backslash\overline B_\sigma(0)}|\nabla u_p|\, dx\le C+|B_1(0)|\,.
\end{equation*}
Passing to polar coordinates, it leads to
\begin{equation*}
\sigma^{N-1} \int_{\sigma}^{1}|v_1'|\le\int_{\sigma}^{1}r^{N-1}|v_1'|\le C'\,,
\end{equation*}
wherewith $v_1$ is a function of bounded variation in $(\sigma,1)$.

\bigskip

\noindent
{\it Proof of 7) and 8).}   \quad

To show that equality 8) holds in the sense of distributions, we choose a test $\psi\in C_0^\infty(0,1)$, fix $0<a<b<1$ in such a way that $\sop\psi\subset(a,b)$ and consider $\varphi$ defined as
\begin{equation}\label{test}
\varphi(x)=\left\{\begin{array}{ll}
\psi({ |x|})\,\frac{1}{|x|^{N-1}}&x\ne0\,;\\
0&x=0\,.
\end{array}\right.
\end{equation}
Having in mind the identity $-\d\z= \lambda f(u)$, we obtain
\begin{align*}
  \lambda \int_{B(0,1)}f(u(x))\varphi(x)\, dx &=\int_{B(0,1)}\z(x)\cdot\nabla \varphi (x)\, dx\\
  &=\int_{B(0,1)}w_1({|x|})\psi'({|x|})\,\frac{dx}{|x|^{N-1}}\\
  &\hspace{2cm}   -\int_{B(0,1)}\frac{N-1}{|x|} w_1({|x|})\psi({|x|})\, \frac{dx}{|x|^{N-1}}.
\end{align*}
Passing to polar coordinates and simplifying, this identity becomes
\begin{equation*}
\lambda \int_0^1f(v_1(r))\psi(r)\, dr=\int_0^1 w_1(r)\psi'(r)\, dr-\int_0^1\frac{N-1}{r} w_1(r)\psi(r)\, dr\,.
\end{equation*}
That is, the distributional derivative de $w_1$ satisfies
\begin{equation*}
  w'_1=-\lambda f(v_1)-\frac{N-1}{r}w_1\,.
\end{equation*}
As a direct consequence $w'_1\in L^\infty(\sigma,1)$ for all $\sigma>0$ and so condition 7) also holds.

\medskip

\noindent
{\it Proof of 9).}  \quad

Before checking assertion 9), observe that $v_1$ is a function of bounded variation
and $w_1$ satisfies that its derivative is bounded in each interval $(\sigma,1)$. Thus,
the one--dimensional pairing $(w_1,v'_1)$ has sense there.

To see 9), consider $\psi\in C_0^\infty(0,1)$ and define $\varphi\in C_0^\infty(B_1(0))$
as above. It follows from the identity $|D u|=(\z, Du)$ as measures that
\begin{align*}
  \int_{B(0,1)}\varphi|Du| &=\int_{B(0,1)}\varphi(\z, Du)\\
  &= -\int_{B(0,1)}u\, \varphi\, \d\z\ dx-\int_{B(0,1)}u\, \z\cdot\nabla\varphi\ dx\,.
\end{align*}
Performing the same manipulations as above, we obtain
\begin{align*}
  \int_0^{1}\psi|v_1'| &=\lambda\int_0^{1}v_1(t)\psi(r)f(v_1(r))\, dr+\int_0^{1}v_1(r)\psi(r)w_1(r)\left(\frac{N-1}r\right)\, dr
  \\
  &\hspace{7cm}-\int_0^{1}v_1(r)w_1(r)\psi'(r)\, dr\\
  &=-\int_0^{1}v_1(r)\psi(r)w'_1(r)\, dt-\int_0^{1}v_1(r)w_1(r)\psi'(r)\, dr\\
  &=\int_0^{1}\psi(w_1,v'_1)\,,
\end{align*}
as desired.

\medskip

\noindent
{\it Proof of 10).}   \quad

Consider a nonnegative $\psi\in C_0^\infty(0,1)$ and define now
$\varphi\in C_0^\infty(B_1(0))$ by
\begin{equation*}
  \varphi(x)=\left\{ \begin{array}{ll}
  \psi({ |x|}) \frac{N-1}{|x|^{N}}& x \ne 0\,;\\[2mm]
  0 & x=0\,.
  \end{array}\right.
\end{equation*}
Recall that we have proved
$$\int_{B(0,1)}\varphi |Du|=\liminf_{p\to1}\int_{B(0,1)}\varphi |\nabla u(x)|^pdx\,.$$
Considering a further subsequence, if necessary, and passing to polar coordinates, we deduce
\begin{equation*}
  \int_0^{1}\frac{N-1}r\, \psi(r)|v'_1|=\lim_{p\to1}\int_0^{1}\frac{N-1}r\,\psi(r)|v'_p|^pdr \,.
\end{equation*}
Now, note that on the right hand side, we have got the derivative of the functional $E=E_p$ defined in \eqref{energy}. Thus
\begin{align*}
  \int_0^{1}\frac{N-1}r\,\psi(r)|v'_p|^pdr &=\int_0^{1}\psi(r)\left(-\frac{dE_p}{dr}\right)\, dr
    \\
  & =\int_0^{1}\psi'(r) E_p\, dr  \\
  &=\frac1{p'}\int_0^{1}\psi'(r)|w_p(r)|^{p'}\, dr+\lambda_p\int_0^{1}\psi'(r)F(v_p(r))\, dr\,.
\end{align*}
Hence,
\begin{multline}\label{e5:04}
\int_0^{1}\frac{N-1}r\, \psi(r)|v'_1|\\
\hspace{2,8cm} =\lim_{p\to1}\frac1{p'}\int_0^{1}\psi'(r)|w_p(r)|^{p'}\, dr+\lim_{p\to1}\lambda_p\int_0^{1}\psi'(r)F(v_p(r))\, dr\,.
\end{multline}
To compute the first integral on the right hand side, recall again that we have seen the existence of a constant $C>0$ satisfying
\begin{equation*}
  \int_{B(0,1)}|\nabla u_p|^p dx\le C\,,\qquad\hbox{for all }p>1\,.
\end{equation*}
Performing our usual manipulations, we achieve a uniform bound for the family
$
  \int_0^{1}r^{N-1}|w_p(r)|^{p'}dr\,.
$
As a consequence, if $0<a<b<1$ satisfy $\sop(\psi)\subset(a,b)$, then the family
$
\int_a^b|w_p(r)|^{p'}dr
$
is also uniformly estimated, owing to the inequality
$$
  a^{N-1}\int_a^b|w_p(r)|^{p'}dr\le \int_a^br^{N-1}|w_p(r)|^{p'}dr\,.
$$
Therefore, there is certain $C_1>0$ such that
$$
\int_0^{1}|\psi'(r)||w_p(r)|^{p'}\, dr\le C_1\,,\qquad\hbox{for all }p>1\,.
$$
Then, we arrive at
\begin{equation*}
  \lim_{p\to1}\frac1{p'}\int_0^{1}|\psi'(r)||w_p(r)|^{p'}\, dr\le\lim_{p\to1}\frac{p-1}p C_1=0\,.
\end{equation*}
Going back to \eqref{e5:04}, we conclude that
\begin{align*}
  \int_0^{1}\frac{N-1}r\, \psi(r)|v'_1| & =\lim_{p\to1}\lambda_p\int_0^{1}\psi'(r)F(v_p(r))\, dt
  \\
  &=\lambda_1\int_0^{1}\psi'(r)F(v_1(r))\, dr\,.
\end{align*}
Therefore, identity (10) is proved.
\end{proof}

Observe that it follows from conditions (6)--(9) of Proposition \ref{limit1}  that the limit $(v,w,\lambda) = (v_1,w_1,\lambda_1)$ is a solution to the limit system
\begin{equation*}
\left\{
\begin{array}{l}
\frac{v'}{|v'|}=w,
\\[4mm]
\displaystyle w'=-\frac{N-1}r w-\lambda f(v)
\\[5mm]
v(0)=\alpha, \quad w(0)=0,
\end{array}
\right.
\end{equation*}
where $\alpha=\lim_{p\to1}\alpha_p$. Moreover, the extra condition \eqref{clau} holds.
Thus, defining $u(x)=v(|x|)$ and $\z(x)=w(|x|)\frac x{|x|}$, we deduce that $u$ is a radial solution to \eqref{problem} satisfying \eqref{clau}. Therefore, if $(\lambda, u)\in\GG$, then \eqref{clau} holds.
So this condition becomes the key to discerning if a solution to problem \eqref{problem} comes from solutions to p-problems.

\begin{theorem}\label{limit2}
Assume that $N\ge1$.
Radial solutions to problem  \eqref{problem} which satisfy \eqref{clau} are continuous.

As a consequence, for every $0<\lambda<\lambda^*$ there exist exactly two bounded solutions, namely:
\begin{enumerate}
  \item The trivial solution $u(x)=0$.
  \item The constant solution $\displaystyle u(x)=f^{-1}\left(\frac N{\lambda}\right)$
\end{enumerate}
Furthermore, assuming that $N\ge2$, the unbounded solution $\displaystyle u(x)=f^{-1}\left(\frac {N-1}{\lambda |x|}\right)$, which exists for every $0<\lambda\le \overline \lambda$, also satisfies condition \eqref{clau}.
\end{theorem}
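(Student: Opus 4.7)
The plan is to prove the three claims in the order stated, using the classification of radial solutions provided by Lemma \ref{tech-re} and Theorem \ref{radial} as the main input.

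First I would establish the continuity claim. From Lemma \ref{tech-re} and Theorem \ref{radial}, any non-constant radial solution of \eqref{problem} is either the continuous unbounded profile $u(x)=f^{-1}((N-1)/(\lambda|x|))$ or a discontinuous solution which is constant equal to $f^{-1}(N/(\lambda\rho))$ on $B_\rho(0)$ and equals $f^{-1}((N-1)/(\lambda|x|))$ on $\rho<|x|<1$, producing a single radial jump at $r=\rho$. My aim is to show that \eqref{clau}, read as an identity of Radon measures on $(0,1)$, excludes the second possibility. Writing $v$ for the radial profile, I would compare the atomic parts of both sides of \eqref{clau} at $r=\rho$: using the BV chain rule (applicable since $F\in\mathcal{C}^1$ with $F'=f$), the measure $dF(v)/dr$ has an atom of mass $F(v(\rho^+))-F(v(\rho^-))$, while $|dv/dr|$ has an atom of mass $v(\rho^-)-v(\rho^+)$ (since $v$ decreases across the jump). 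Matching these under \eqref{clau} yields
\[
\lambda\bigl(F(v(\rho^-))-F(v(\rho^+))\bigr)=\frac{N-1}{\rho}\bigl(v(\rho^-)-v(\rho^+)\bigr).
\]
The Mean Value Theorem then produces $\xi\in(v(\rho^+),v(\rho^-))$ with $f(\xi)=(N-1)/(\lambda\rho)$. But Lemma \ref{tech-re} gives $f(v(\rho^+))=(N-1)/(\lambda\rho)$ as well, so the strict monotonicity of $f$ (already exploited in the proof of Proposition \ref{unidimensional2}) rules out $\xi>v(\rho^+)$, contradicting the jump. In the degenerate case $N=1$, condition \eqref{clau} reads $dF(v)/dr=0$, so $F(v)$ is constant and hence $v$ is constant by the strict monotonicity of $F$.

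The second claim is then immediate: Theorem \ref{radial} (resp.\ Corollary \ref{corollary} in dimension one) enumerates the bounded radial solutions for $0<\lambda<\lambda^*$ as the two constant profiles $u\equiv 0$ and $u\equiv f^{-1}(N/\lambda)$, together with, when $\lambda\le\overline\lambda$, the discontinuous family parameterized by $\rho\in(0,1)$, and the first part has just eliminated that family. The third claim reduces to a direct verification. For the unbounded radial profile $v(r)=f^{-1}((N-1)/(\lambda r))$, which is continuous and strictly decreasing on $(0,1)$, I would combine the identity $f(v(r))=(N-1)/(\lambda r)$ with the chain rule for BV functions (applicable since $F\in\mathcal{C}^1$ and $v$ has bounded variation on every $(\sigma,1)$ with $\sigma>0$) to obtain
\[
\frac{dF(v)}{dr}=f(v)\,\frac{dv}{dr}=\frac{N-1}{\lambda r}\,\frac{dv}{dr}
\]
as Radon measures on $(0,1)$. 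Multiplying by $\lambda$ and using $|dv/dr|=-dv/dr$, which is valid for a continuous decreasing function, yields exactly \eqref{clau}.

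The main obstacle I anticipate is the bookkeeping in the jump analysis of the first part, specifically promoting the distributional identity \eqref{clau} to a measure identity on a neighborhood of $r=\rho$ so that the atoms can be compared. I would handle this by noting that, for a candidate discontinuous solution, $F(v)$ is piecewise smooth with a single jump at $\rho$, so its distributional derivative is a Radon measure there; similarly $|dv/dr|$ is a Radon measure, so the identity of distributions forces an identity of measures and then an identity of atomic parts.
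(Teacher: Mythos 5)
Your proposal is correct and follows essentially the same route as the paper: classify the radial solutions via Lemma \ref{tech-re} and Theorem \ref{radial}, read \eqref{clau} as an identity of measures at the single jump $r=\rho$, and apply the mean value theorem to $F$ on the jump interval to get $f(\xi)=\frac{N-1}{\lambda\rho}$ while strict monotonicity of $f$ forces $f(\xi)>\frac{N-1}{\lambda\rho}$, a contradiction. The only (harmless) differences are that you spell out the direct verification of \eqref{clau} for the unbounded profile, which the paper leaves implicit, and you phrase the contradiction via $f(v(\rho^+))$ rather than the paper's two-sided bound $\frac{N-1}{\lambda\rho}<f(\xi)<\frac{N}{\lambda\rho}$.
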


\begin{proof} The one--dimensional case is just Corollary \ref{corollary} since any constant solution satisfies condition \eqref{clau}. So, henceforth we consider $N\ge2$.

Fixed $\lambda$, assume that $u(x)=v(|x|)$ is a discontinuous solution to problem \eqref{problem}.
We are checking that it does not satisfy condition \eqref{clau} in the discontinuity set $\{|x|=\rho\}$, with $0<\rho<1$. In this set, condition \eqref{clau} reads as
\begin{align}\label{clau1}
\nonumber  \lambda\left(F(v^+(\rho))-F(v^-(\rho))\right) &=-\frac{N-1}\rho\left|v^+(\rho)-v^-(\rho)\right|\\
  &=\frac{N-1}\rho\left(v^+(\rho)-v^-(\rho)\right)
\end{align}
since $v$ is decreasing.

On account of Theorem \ref{radial}, $v$ is given by
\[v(r)=\left\{
\begin{array}{ll}
\displaystyle f^{-1}\left(\frac{N}{\lambda\rho}\right) &\hbox{if } r<\rho\,;\\[5mm]
\displaystyle f^{-1}\left(\frac{N-1}{\lambda r}\right) &\hbox{if } \rho<r<1\,.
\end{array}\right.
\]
Hence, we have $v^+(\rho)=f^{-1}\left(\frac{N}{\lambda\rho}\right)$ and $v^-(\rho)=f^{-1}\left(\frac{N-1}{\lambda\rho}\right)$. Applying the mean value theorem to $F$ in the interval $[v^-(\rho), v^+(\rho)]$, we find $\xi\in ]v^-(\rho), v^+(\rho)[$ such that
\[F(v^+(\rho))-F(v^-(\rho))=f(\xi)\left(v^+(\rho)-v^-(\rho)\right)\,.\]
Thus, $\displaystyle \frac{N-1}{\lambda \rho}<f(\xi)<\frac{N}{\lambda\rho}$ and so identity \eqref{clau1} does not hold.
\end{proof}


\subsection{Asymptotics for the critical value $\lambda_p^*$ and minimal solutions $w_{\lambda(p)}$  on the unit ball as $p$ approaches $1$.}

\medskip

  In this subsection we compare the critical value $\lambda^*$ and trivial minimal solutions obtained by the $1$--Laplacian problem in the radial case with the limit to the $p$--Laplacian results. Specifically, for $\Omega=B_1(0)$ and $N\geq 1$ we will show  that the critical value $\lambda_p^*$ to problem \eqref{plaplacian} converges to $\lambda^*=\frac{N}{f(0)}$ when $p$ tends to $1$, which is exactly the critical value obtained for problem \eqref{problem} (Theorem \ref{unidimensional} and Theorem \ref{radial}). Furthermore, $w_{\lambda(p)}$, minimal solutions of problem \eqref{plaplacian} tend to trivial solutions.
We recall that these trivial solutions correspond to minimal solutions of problem \eqref{problem} established in Theorem \ref{unidimensional} and Theorem \ref{trivial}, so that the limit of minimal solutions is also a minimal solution. Nevertheless, we are not be able to see that extremal solutions tend to the trivial solution; we only succeed in some specific cases, namely: $f(u)=e^u$ and $f(u)=(1+u)^m$, when $m>e^{-1}$.

  We start by considering the initial value problem \eqref{BVP}:
 \begin{equation}
\left\{
\begin{array}{lc}
r^{1-N}\left(r^{N-1}|u'|^{p-2}u'  \right)'+\lambda f(u)=0,& r \hbox{ in } (0,1),
\\
\\
u>0, & r \hbox{ in } (0,1),
\\
\\
u(0)=\alpha>0, u'(0)=0. &
\end{array}
\right.
\end{equation} Then, there exists a unique solution $(\lambda, u)$ for every $\alpha=\|u\|_\infty>0$ and $\lambda$ can be parameterized in the following way:
\begin{equation}\label{continuum}
\lambda(\alpha)=\alpha^{\,p-1}\left(\int_0^1 \left( t^{1-N}\int_0^t s^{N-1}f(u(s))ds  \right)^{\frac{1}{p-1}}  dt \right)^{1-p}.
\end{equation}
Moreover, the associated bifurcation diagram is a continuum of solutions $(\lambda, u)\in [0,\infty[\times \mathcal{C}([0,1])$, which depends on the dimension $N$.

   In particular, if we take $f(u)=e^u$, the graph of the continuum of solutions is classified into three groups: (1) $N\leq p$, (2) $p<N<\frac{p^2+3p}{p-1}$ and (3) $N\ge \frac{p^2+3p}{p-1}$ (see \cite{JS}). Letting $p$ go to $1$, only the first two cases can be considered. Regarding the first case ($N\leq p$), there are exactly two solutions for each  $\lambda \in (0,\lambda_p^*)$ and one solution for $\lambda = \lambda_p^*$. This behavior of the solutions is reflected in Figure \ref{fig:3} (see below). It is noteworthy to compare it with Figure \ref{fig:1} which corresponds to de limit case $N=p=1$. On the other hand,  if we take into account the second case ( $p<N<\frac{p^2+3p}{p-1}$ ), there is a continuum of solutions which oscillates around the line  $\overline\lambda_{(p)}=p^{\,p-1}(N-p)$ with the amplitude of oscillations tending to zero, as $\|u\|_\infty \to \infty$ as can be seen in Figure \ref{fig:4} below. Observe that the bound $\overline\lambda_{(p)}$ plays a leading role in this second case (for which the problem has infinitely many nontrivial solutions). Something similar happens in the limit case for $p=1$ (Figure \ref{fig:2}). Observe that $\overline\lambda_{(p)} \to N-1$ when $p \to 1$.
It should be noted that for the problem \eqref{problem} there are infinitely many nontrivial solutions with Dirichlet conditions on the boundary (in the sense of the traces) as long as $\overline \lambda=\frac{N-1}{f(0)}$ (Theorem \ref{radial}).
\begin{figure}[H]
\begin{minipage}[b]{0.4\linewidth}
\begin{tikzpicture}[scale=0.50]
\draw[<->] (0,8)--(0,0)--(8,0);
\fill (0.1,8) circle (0.1pt) node[above] {$\|u\|_\infty$};
\fill (8,0) circle (0.1pt) node[right] {$\lambda$};
\fill (0,0) circle (2pt) node[below] {$0$};
\fill (6,0) circle (2pt) node[below] {$\lambda^*_{(p)}$};
\node at (5.5,6) {$N\leq p$};
       \draw[very thick,domain=-0.01:1.06,smooth,variable=\y,blue]  plot ({15.7*\y*exp(-\y)+.2},{\y});
        \draw[very thick,domain=1.06:7.8,smooth,variable=\y,red]  plot ({15.7*\y*exp(-\y)+.2},{\y});
              \draw[dashed]  (6,0)--(6,1.06);
    \end{tikzpicture}
    \caption{}
    \label{fig:3}
    \end{minipage}
\hspace{0.5cm}
%
\begin{minipage}[b]{0.4\linewidth}
\begin{tikzpicture}[scale=0.50]
\draw[<->] (0,8)--(0,0)--(8,0);
\fill (0.1,8) circle (0.1pt) node[above] {$\|u\|_\infty$};
\fill (8,0) circle (0.1pt) node[right] {$\lambda$};
\fill (0,0) circle (2pt) node[below] {$0$};
\fill (6,0) circle (2pt) node[below] {$\lambda^*_{(p)}$};
\fill (3,0) circle (2pt) node[below] {$\overline\lambda_{(p)}$};
\node at (7.8,6) {$p<N<\frac{3p+p^2}{p-1}$};
       \draw[very thick,domain=-0.01:1,smooth,variable=\y,blue]  plot ({15.7*\y*exp(-\y)+.2},{\y});
              \draw[very thick,domain=2:8.9,smooth,variable=\y,red]  plot ({2.95+7*sin(15*\y^2)/\y},{-1+\y});
              \draw[dashed]  (6,0)--(6,1.1);
               \draw[dashed]  (3,0)--(3,8);
    \end{tikzpicture}
    \caption{ }
    \label{fig:4}
    \end{minipage}
\end{figure}
The bifurcation diagrams above shows different features in each case depending on dimension $N$. However, regarding minimal solutions (represented in the previous diagrams in blue)  we will show below that they tend to trivial solutions. We stress that this fact occurs regardless of the dimension $N$ and for any nonlinearity $f$ satisfying hypotheses \eqref{hipotesis}.

\begin{theorem}\label{limit_p-laplacian}
Fix $N\geq 1$, let $p>1$ be small enough and denote by $\{ w_{\lambda(p)} \}_{\lambda \in [0,\lambda_p^*]}$ the increasing branch of positive minimal solutions to problem
\begin{equation}\label{corona}
\left\{
\begin{array}{cc}
-\Delta_p u=\lambda f(u), & \, {\rm{in}} \,\, B_1(0),
\\
u=0, &  \, {\rm{on}} \,\, \partial B_1(0),
\end{array}
\right.
\end{equation}
where $\lambda_p^*$ is the critical value such there is no bounded solutions for $\lambda>\lambda_p^*$, and $f$  satisfies \eqref{hipotesis}. Then,
\begin{enumerate}
\item $\displaystyle \lambda_p^*\to \frac{N}{f(0)}$,\, as $p\to 1$.
\item $\|w_{\tilde \lambda (p)}\|_\infty \to 0$,\, as $p\to 1$ and for every $\tilde \lambda \in [0,\frac{N}{f(0)}[$.
\end{enumerate}
\end{theorem}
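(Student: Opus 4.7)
The plan is in two steps. First I would prove (2) by a monotone-iteration argument, from which $\liminf_{p\to 1}\lambda_p^*\ge N/f(0)$ drops out immediately. Then I would use Picone's identity together with the Kawohl--Fridman asymptotics for the first $p$-eigenvalue of the ball to obtain the matching upper bound $\limsup_{p\to 1}\lambda_p^*\le N/f(0)$.

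For (2) and the lower bound in (1), fix $\tilde\lambda<N/f(0)$. By continuity of $f$, pick $\delta>0$ with $\tilde\lambda f(\delta)<N$, and set $M_p:=(\tilde\lambda f(\delta)/N)^{1/(p-1)}(p-1)/p$. Since $\tilde\lambda f(\delta)/N<1$, $M_p\to 0$ as $p\to 1^+$, so $M_p\le\delta$ for $p$ close enough to $1$. Recall that $w_{\tilde\lambda(p)}$ is the $L^\infty$-limit of the monotone iteration $u_0=0$, $-\Delta_p u_n=\tilde\lambda f(u_{n-1})$ in $B_1(0)$ with zero Dirichlet data. I would show by induction that $\|u_n\|_\infty\le M_p$: the induction step is that if $\|u_{n-1}\|_\infty\le M_p\le\delta$, then $f(u_{n-1})\le f(\delta)$, so the weak comparison principle yields $u_n\le W$ with $-\Delta_p W=\tilde\lambda f(\delta)$ in $B_1(0)$, $W=0$ on $\partial B_1(0)$, and an explicit radial integration gives $\|W\|_\infty=M_p$. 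Passing to the limit gives $\|w_{\tilde\lambda(p)}\|_\infty\le M_p\to 0$, which is (2). In particular $w_{\tilde\lambda(p)}$ is a bounded solution, so $\lambda_p^*\ge\tilde\lambda$; letting $\tilde\lambda\uparrow N/f(0)$ yields the lower bound.

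For $\limsup\lambda_p^*\le N/f(0)$, fix $\lambda_0<\lambda_p^*$ and set $w:=w_{\lambda_0(p)}$, which is positive in $B_1(0)$ by the strong maximum principle (since $-\Delta_p w\ge\lambda_0 f(0)>0$). Let $\phi_p>0$ be a first Dirichlet eigenfunction of $-\Delta_p$ on $B_1(0)$ with eigenvalue $\lambda_{1,p}$. Picone's identity applied to $(w,\phi_p)$, together with integration by parts, gives
\[
\lambda_{1,p}\int_{B_1(0)}\phi_p^p\,dx=\int_{B_1(0)}|\nabla\phi_p|^p\,dx\ge\lambda_0\int_{B_1(0)}\frac{f(w)}{w^{p-1}}\phi_p^p\,dx\ge\lambda_0\,m_p\int_{B_1(0)}\phi_p^p\,dx,
\]
where $m_p:=\inf_{s>0}f(s)/s^{p-1}$. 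Hence $\lambda_0\le\lambda_{1,p}/m_p$; letting $\lambda_0\uparrow\lambda_p^*$ gives $\lambda_p^*\le\lambda_{1,p}/m_p$. By Kawohl--Fridman \cite{KF}, $\lambda_{1,p}\to h(B_1(0))=N$; moreover, the critical-point equation $s_*f'(s_*)=(p-1)f(s_*)$ for the minimizer $s_*(p)$ forces $s_*(p)\to 0^+$ with $(p-1)\log s_*(p)\to 0$, so $s_*(p)^{p-1}\to 1$ and $m_p=f(s_*(p))/s_*(p)^{p-1}\to f(0)$. Therefore $\lambda_p^*\le\lambda_{1,p}/m_p\to N/f(0)$.

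The technical heart is the upper bound; its only delicate calculation is the convergence $m_p\to f(0)$, which uses the $C^1$ regularity of $f$ near $0$ granted by hypothesis \eqref{hipotesis} together with the Kawohl--Fridman asymptotics for the first $p$-Laplace eigenvalue of the ball.
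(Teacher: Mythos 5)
Your overall architecture is sound and, in two places, genuinely different from the paper's. For part (2) and the lower bound $\liminf_{p\to1}\lambda_p^*\ge N/f(0)$, you use the monotone iteration together with the explicit radial barrier $W$ solving $-\Delta_pW=\tilde\lambda f(\delta)$, whose maximum $M_p=(\tilde\lambda f(\delta)/N)^{1/(p-1)}(p-1)/p$ tends to $0$ precisely because $\tilde\lambda f(\delta)<N$; this is a clean, direct argument. The paper instead gets the lower bound from the parametrization \eqref{continuum} of the radial branch and proves (2) by contradiction, sliding down the branch to a fixed level $\kappa$ and using the integral equation \eqref{gador}; both work, and yours is arguably more self-contained. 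For the upper bound you apply Picone's inequality to the pair $(w_{\lambda_0(p)},\phi_p)$ to get $\lambda_p^*\le\lambda_{1(p)}/m_p$ with $m_p=\inf_{s>0}f(s)/s^{p-1}$; this is exactly the paper's bound \eqref{upper_bound} (note $1/m_p=F_p(\overline\alpha_p)$), which the paper obtains instead by quoting the Cabr\'e--Sanch\'on inequality \eqref{Cabre-S} and removing the $\max$ by a rescaling trick. Your Picone route is legitimate (with the usual $w+\varepsilon$ regularization to justify the test function $\phi_p^p/w^{p-1}$) and avoids the rescaling.

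The one genuine gap is the step you yourself single out as delicate: the convergence $m_p\to f(0)$. You assert that the critical-point equation $s_*f'(s_*)=(p-1)f(s_*)$ ``forces $s_*(p)\to0^+$ with $(p-1)\log s_*(p)\to0$''. Neither claim follows from that equation alone: an increasing $\mathcal C^1$ function may have $f'(s_0)=0$ at interior points, and, more importantly, the equation by itself does not exclude $s_*(p)\to\infty$, which is the scenario that would destroy $s_*(p)^{p-1}\to1$. Ruling it out requires the superlinearity of $f$ at infinity, held uniformly as $p\to1$ (this is where the paper invests most of its effort, via the claim $\liminf_{s\to\infty}sf'(s)/f(s)>0$); it has nothing to do with the $\mathcal C^1$ regularity of $f$ near $0$ that you invoke. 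Once $s_*(p)$ is known to be bounded the conclusion is easy: $m_p\le f(s)/s^{p-1}$ for each fixed $s>0$ gives $\limsup_{p\to1} m_p\le f(0)$, while $m_p\ge f(0)/s_*(p)^{p-1}$ together with $s_*(p)^{p-1}\to1$ (which follows from boundedness) gives the matching lower bound --- no rate information such as $(p-1)\log s_*(p)\to0$ is actually needed. So the gap is fixable, but as written the key limit is asserted rather than proved.
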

\begin{proof}
Firstly, we note that, fixed $N\geq 1$, there exists $0<\delta(N)<1$ such that $N<\frac{p^2+3p}{p-1}$ for every $p\in (1,1+\delta(N))$. This ensures the existence of minimal solutions, $w_{\lambda(p)}$, up to the critical value $\lambda=\lambda_p^*$ (\cite[Theorem 1.3]{CCS}). So from now on we assume  that $1<p<1+\delta(N)$.

  {\underline {Proof of $\emph{(1)}$}:} To begin with, observe that by \eqref{continuum} we obtain
\begin{align*}
\nonumber  \lambda_p^* & \geq \lambda (\alpha)
\\
\nonumber  &= \alpha^{\,p-1}\left(\int_0^1 \left( t^{1-N}\int_0^t s^{N-1}f(u(s))ds  \right)^{\frac{1}{p-1}}  dt \right)^{1-p}
\\
\nonumber &\geq  \alpha^{\,p-1}\left(\int_0^1 \left( t^{1-N}\int_0^t s^{N-1}f(\alpha) ds  \right)^{\frac{1}{p-1}}  dt \right)^{1-p}
\\
\nonumber & =N\left(\frac{p-1}{p}  \right)^{1-p}\frac{\alpha^{p-1}}{f(\alpha)}\,.
\end{align*}
Since this inequality holds for all positive $\alpha$, we obtain
\begin{equation}\label{underbound}
\lambda_p^*\geq N\left(\frac{p}{p-1}  \right)^{p-1} \max_{\alpha \in [0,\infty[}\frac{\alpha^{p-1}}{f(\alpha)}.
\end{equation}
Denote by $F_p(\alpha):=\frac{\alpha^{p-1}}{f(\alpha)}$. Obviously, $F_p\in \mathcal{C}^1$ is nonnegative with $F_p(0)=0$ and $F_p(\alpha)\to 0$ when $\alpha \to \infty$ (since $f^{1/p-1}(s)$ is superlinear). Then, $F_p$  has its maximum in some $\overline \alpha_p \in ]0,\infty[$ (i.e., $F_p(\overline \alpha_p)=\max_{\alpha \in [0,\infty[}F_p(\alpha))$. Moreover, $F^{\, \prime}(\overline \alpha_p)=0$ implies
\begin{equation}\label{vamos}
\frac{\overline \alpha_pf^{\,\prime}(\overline \alpha_p)}{f(\overline \alpha_p)}=p-1
\end{equation}
Now, we claim the sequence $\{\overline \alpha_p\}_p$ is bounded. Looking for a contradiction, we assume that there exists a (not relabeled) subsequence $\overline \alpha_p \to \infty$ as $p \to 1$, and we now show that this fact is in contradiction with \eqref{vamos}. First, observe that
\begin{equation}\label{boda}
\lim_{s\to \infty} \inf \frac{sf^{\,\prime}(s)}{f(s)}=\gamma>0
\end{equation}
otherwise there exists $s_0>0$ such that
$$
\frac{sf^{\,\prime}(s)}{f(s)}<\varepsilon, \,\, \hbox{ for all }\, s\geq s_0,
$$
holds for every $0<\varepsilon<\min \{p-1,\gamma\}$.
This implies that $\displaystyle \frac{f(s)}{s^\varepsilon}$ is decreasing for $s\geq s_0$ since
\begin{align*}
\left( \frac{f(s)}{s^\varepsilon} \right)^{\, \prime} & =\frac{f^{\, \prime}(s)s^\varepsilon-f(s)\varepsilon s^{\varepsilon-1}}{s^{2\varepsilon}}
\\
& =\frac{f(s)}{s^{\varepsilon +1}}\left(\frac{sf^{\, \prime}(s)}{f(s)} -\varepsilon \right)<0,
\end{align*}
 for all  $s\geq s_0$. The fact that function $\displaystyle s \mapsto \frac{f(s)}{s^\varepsilon}$ is decreasing for $s\geq s_0$
is in contradiction with the fact that $f^{1/p-1}$ is superlinear (because $\varepsilon < p-1$). Then, if $\overline \alpha_p \to \infty$ as $p \to 1$ and  by using \eqref{boda} and \eqref{vamos} we obtain the following contradiction
$$
0<\gamma  =\lim_{s\to \infty} \inf \frac{sf^{\,\prime}(s)}{f(s)} \leq \lim_{p\to 1}\frac{\overline \alpha_pf^{\,\prime}(\overline \alpha_p)}{f(\overline \alpha_p)}=\lim_{p\to 1}(p-1)=0.
$$
Hence, we have checked that the sequence $\{\overline \alpha_p\}_p$ is bounded.
Thus, taking limits in $F_p(\alpha)\leq F_p(\overline \alpha_p)$ when $p\to 1$, we obtain that $F_1(\alpha)\leq F_1(\overline \alpha)$ for all $\alpha\geq 0$, being
$$
F_1(\alpha)=\left\{ \begin{array}{lc} 0,  &  \hbox { if } \alpha=0, \\ \frac{1}{f(\alpha)},  &  \hbox { if } \alpha>0. \end{array}  \right.
$$
Therefore,  $ F_1(\overline \alpha)=\sup_{\alpha\geq 0}F_1(\alpha)=\frac{1}{f(0)}$. As a consequence,
$$
\lim_{p\to 1} \max_{\alpha \in [0,\infty[}\frac{\alpha^{p-1}}{f(\alpha)}=\lim_{p\to 1}F_p(\overline \alpha_p)= F_1(\overline \alpha) =    \frac{1}{f(0)}.
$$
Thus, by \eqref{underbound}, we obtain the following lower bound
\begin{equation}\label{lower_bound}
\lambda_p^* \geq N\left(\frac{p}{p-1}  \right)^{p-1}F_p(\overline \alpha_p)\to \frac{N}{f(0)}, \,\,\,\,\, p\to 1.
\end{equation}
On the other hand, in order to establish an upper bound to $\lambda_p^*$, we take into account  the following inequality from the proof of \cite[Theorem 1.4]{CS1}
\begin{equation}\label{Cabre-S}
\lambda_p^*\leq \max \left\{\lambda_{1(p)},\lambda_{1(p)}F_p(\overline \alpha_p)   \right\},
\end{equation}
where $\lambda_{1(p)}$ is the principal eigenvalue of the $p$--Laplacian in $\Omega=B_1(0)$.
We next prove that it actually holds
\begin{equation*}
  \lambda_p^*\leq \lambda_{1(p)}F_p(\overline \alpha_p)\,.
\end{equation*}
To this end, we fix $p$, denote by $w_{\lambda(p)}$ the minimal solution to \eqref{corona} and take $K>F_p(\overline \alpha_p)^{1/(1-p)}$, so it follows that $u_{\lambda(p)}:=Kw_{\lambda(p)}$ is solution to
\begin{equation}\label{confinado}
\left\{
\begin{array}{lc}
-\Delta_p u_{\lambda(p)}=\lambda \, K^{p-1}\tilde f(u_{\lambda(p)}), & \, {\rm{in}} \,\, B_1(0),
\\
u_{\lambda(p)}=0, &  \, {\rm{on}} \,\, \partial B_1(0),
\end{array}
\right.
\end{equation}
where $\tilde f(s)=f\left(\frac{s}{K}  \right)$. Note that $\tilde f$ is also under the hypotheses of \eqref{hipotesis}. It follows, by the change of variable $\alpha=Kt$, that
$$
\tilde F_p(\overline \alpha_p):=\max_{\alpha \in [0,\infty[}\frac{\alpha^{p-1}}{\tilde f(\alpha)}=\max_{t \in [0,\infty[}\frac{(tK)^{p-1}}{f(t)}=K^{p-1}F_p(\overline \alpha_p).
$$
Finally, using inequality \eqref{Cabre-S} in \eqref{confinado} and the last equality, we get
$$
\lambda_p^*\, K^{p-1}\leq \max \left\{\lambda_{1(p)},\lambda_{1(p)}\tilde F_p(\overline \alpha_p)    \right\}=\max \left\{\lambda_{1(p)},\lambda_{1(p)} K^{p-1}F_p(\overline \alpha_p)    \right\}
$$
and then,
$$
\lambda_p^* \leq \max \left\{\frac{\lambda_{1(p)}}{ K^{p-1}},\lambda_{1(p)}\,F_p(\overline \alpha_p)    \right\}.
$$
Taking into account that it holds for any $K>F_p(\overline \alpha_p)^{1/(1-p)}$, we establish the following upper bound of $\lambda_p^*$
\begin{equation}\label{upper_bound}
\lambda_p^* \leq \lambda_{1(p)}\,F_p(\overline \alpha_p)\to \frac{N}{f(0)}, \,\,\,\,\, p\to 1.
\end{equation}
Where we have used that $\lambda_{1(p)} \to h(B_1(0))=N$ when $p\to 1$, being $h(B_1(0))$ the Cheeger constant for the unit ball  (\cite{KF}).


{\underline {Proof of $\emph{(2)}$}:} Fix $0<\tilde \lambda <\frac{N}{f(0)}$. Due to the previous lower bound \eqref{lower_bound}, there exists $p_0>1$ small enough such that $\tilde \lambda < \lambda_{p}^*$ for every $p\in ]1,p_0[$. This ensures the existence of minimal solutions, $w_{\tilde \lambda (p)}$, to \eqref{corona} with $\lambda=\tilde \lambda$ and $1<p<p_0$.

   We argue by contradiction and suppose there exists a sequence $\{p_n\}\subset ]1,p_0]$, with $p_n \to 1$, such that $\|w_{\tilde \lambda (p_n)}\|_\infty  \to \beta \in ]0,\infty]$ when $n\to \infty$. Now, we fix $\kappa>0$ satisfying
   \begin{equation}\label{betax}
   \kappa<\min \left\{ \beta, f^{-1}\left(N/\tilde \lambda \right)  \right\}
   \end{equation}
Since the branch of minimal solutions to \eqref{corona} is positive and increasing with respect to $\lambda$, there exists a sequence $\{\lambda_n\}_n$ with $0<\lambda_n\leq \tilde \lambda $ such that
   \begin{equation}\label{virus}
   \|w_{\lambda_n (p_n)}\|_\infty=\kappa, \quad \hbox{ for every } n\geq n_0.
   \end{equation}

     On the other hand, by \eqref{gador} we get
     \begin{align*}
     \|w_{\lambda_n (p_n)}\|_\infty& =\int_0^1 \left[ \frac{\lambda_n}{t^{N-1}}\int_0^t s^{N-1}f(w_{\lambda_n (p_n)}(s))ds  \right]^{\frac{1}{p_n-1}}dt
\\ &\leq f\left( \|w_{\lambda_n (p_n)}\|_\infty  \right)^{\frac{1}{p_n-1}}\int_0^1 \left[ \frac{\lambda_n}{t^{N-1}}\int_0^t s^{N-1}ds  \right]^{\frac{1}{p_n-1}}dt
\\ &=\left(\frac{\lambda_n  f\left( \|w_{\lambda_n (p_n)}\|_\infty  \right)}{N}    \right)^{\frac{1}{p_n-1}}\frac{p_n-1}{p_n}.
     \end{align*}

Replacing this inequality by \eqref{virus} and taking in account  \eqref{betax} and that $\lambda_n\leq \tilde \lambda$, it follows that
$$
\kappa < \left( \frac{\lambda_n}{\tilde \lambda} \right)^{\frac{1}{p_n-1}}\frac{p_n-1}{p_n}\to 0, \quad \hbox{ as } p_n \to 1,
$$
 which is a contradiction since we had fixed $\kappa>0$.
\end{proof}

Note that inequalities \eqref{lower_bound} and \eqref{upper_bound} provide us with $\lambda_p^*$ estimates for specific nonlinearities. In particular, in the following corollary we provide such estimates for the typical   exponential and potential types nonlinearities.

\begin{corollary}\label{good_estimates} Let $p>1$ and $N\geq 1$. Consider $\lambda_p^*(f)$ the critical value to problem \eqref{corona} for $f(u)=e^u$ or $f(u)=(1+u)^m$ with $m>p-1$ . Then, the following estimates holds
\begin{enumerate}
\item$$N\left(\frac{p}{e}  \right)^{p-1}\leq\lambda_p^*(e^u)\leq N\left(\frac{p}{e}  \right)^{p-1}\frac{\Gamma \left(p+1+\frac{N(p-1)}{p}\right)}{\Gamma(p+1)\Gamma \left(2+\frac{N(p-1)}{p}\right)}
$$
\item
$$
 \hspace{-2.5cm}    \frac{Np^{\,p-1}(m-p+1)^{m-p+1}}{m^m}  \leq\lambda_p^*((1+u)^m)
$$

$$
\hspace{4cm} \leq \frac{Np^{\,p-1}(m-p+1)^{m-p+1}}{m^m}\frac{\Gamma \left(p+1+\frac{N(p-1)}{p}\right)}{\Gamma(p+1)\Gamma \left(2+\frac{N(p-1)}{p}\right)}
$$
\end{enumerate}
where $\Gamma(z)=\int_0^\infty t^{z-1}e^{-t}dt$ is the Gamma function.
\end{corollary}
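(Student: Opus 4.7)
The plan is to deduce both estimates from the two one-sided bounds already established in the proof of Theorem \ref{limit_p-laplacian}, namely
\begin{equation*}
N\left(\frac{p}{p-1}\right)^{p-1} F_p(\overline{\alpha}_p) \;\leq\; \lambda_p^* \;\leq\; \lambda_{1(p)}\, F_p(\overline{\alpha}_p),
\end{equation*}
where $F_p(\alpha)=\alpha^{p-1}/f(\alpha)$ and $\overline{\alpha}_p$ denotes its maximizer. Thus the task reduces to (i) computing $F_p(\overline{\alpha}_p)$ explicitly for the two nonlinearities and (ii) bounding $\lambda_{1(p)}$ from above on $B_1(0)$ by an appropriate test function whose Rayleigh quotient produces the Gamma-function factor in the statement.

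For (i), a direct critical-point calculation suffices. When $f(u)=e^u$, the equation $F_p'(\alpha)=0$ reads $(p-1)-\alpha=0$, so $\overline{\alpha}_p=p-1$ and $F_p(\overline{\alpha}_p)=\big((p-1)/e\big)^{p-1}$. When $f(u)=(1+u)^m$ with $m>p-1$, the condition $(p-1)(1+\alpha)=m\alpha$ gives $\overline{\alpha}_p=(p-1)/(m-p+1)$, and substituting back yields $F_p(\overline{\alpha}_p)=(p-1)^{p-1}(m-p+1)^{m-p+1}/m^m$. Plugging these into the lower bound above produces exactly the left-hand sides of (1) and (2).

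For (ii), I will test the Rayleigh quotient with $v(x)=1-|x|^{p/(p-1)}$, which lies in $W_0^{1,p}(B_1(0))$ and is the natural choice because $|\nabla v|^p$ has a monomial profile. Using polar coordinates and writing $\omega_{N-1}$ for the measure of the unit sphere,
\begin{equation*}
\int_{B_1(0)}|\nabla v|^p\,dx \;=\; \omega_{N-1}\!\left(\frac{p}{p-1}\right)^{\!p}\!\!\int_0^1 r^{p/(p-1)+N-1}\,dr \;=\; \omega_{N-1}\,\frac{p^p}{(p-1)^{p-1}\,(p+N(p-1))}.
\end{equation*}
For the denominator, the substitution $u=r^{p/(p-1)}$ turns $\int_0^1(1-r^{p/(p-1)})^p r^{N-1}\,dr$ into the Beta integral $\tfrac{p-1}{p}B\!\left(\tfrac{N(p-1)}{p},p+1\right)$. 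Taking the quotient, setting $a=N(p-1)/p$, and using the identity $\Gamma(2+a)=a(1+a)\Gamma(a)$ to clean up the combination $(1+a)\Gamma(a)$, one obtains
\begin{equation*}
\lambda_{1(p)} \;\leq\; N\left(\frac{p}{p-1}\right)^{\!p-1}\frac{\Gamma(p+1+N(p-1)/p)}{\Gamma(p+1)\,\Gamma(2+N(p-1)/p)}.
\end{equation*}
Multiplying by $F_p(\overline{\alpha}_p)$ the factors $(p-1)^{p-1}$ cancel against $((p-1)/p)^{p-1}$ inside $F_p(\overline{\alpha}_p)$, leaving the claimed right-hand sides of (1) and (2).

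The only genuinely delicate step is the Beta-function substitution in the Rayleigh-quotient denominator: one must track the exponent $N(p-1)/p$ that emerges from the change of variable $u=r^{p/(p-1)}$ and then use the recurrence $\Gamma(2+a)=a(1+a)\Gamma(a)$ in the right direction so that the $N$ and $(p/(p-1))^{p-1}$ prefactors appear cleanly. Everything else—the two maximizations in (i) and the insertion of the bounds into \eqref{lower_bound} and \eqref{upper_bound}—is mechanical once this Gamma-function identity is in hand.
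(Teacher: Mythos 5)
Your proof is correct and follows the same overall strategy as the paper: compute $F_p(\overline{\alpha}_p)$ explicitly for the two nonlinearities (your critical-point calculations and the resulting values $((p-1)/e)^{p-1}$ and $(p-1)^{p-1}(m-p+1)^{m-p+1}/m^m$ agree with the paper's) and insert them into the bounds \eqref{lower_bound} and \eqref{upper_bound}. The one place you diverge is the upper bound on the first eigenvalue $\lambda_{1(p)}$ of the $p$--Laplacian on the ball: the paper simply cites the estimate from Benedikt--Dr\'abek \cite{BD}, whereas you rederive it from scratch by testing the Rayleigh quotient with the $p$--torsion profile $v(x)=1-|x|^{p/(p-1)}$. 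Your computation checks out --- the Dirichlet integral gives $\omega_{N-1}p^p/\bigl((p-1)^{p-1}(p+N(p-1))\bigr)$, the Beta substitution gives $\tfrac{p-1}{p}B(N(p-1)/p,\,p+1)$, and the recurrence $\Gamma(2+a)=a(1+a)\Gamma(a)$ with $a=N(p-1)/p$ reproduces exactly the cited constant $N(p/(p-1))^{p-1}\Gamma(p+1+a)/(\Gamma(p+1)\Gamma(2+a))$ --- so your version is self-contained where the paper's is not, at the cost of one extra explicit integration.
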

\begin{proof}
In \cite{BD} the authors give the estimate from above for the first eigenvalue of the $p$--Laplacian operator on the unit ball:
$$
\lambda_{1(p)}\leq N\left(\frac{p}{p-1}  \right)^{p-1}\frac{\Gamma \left(p+1+\frac{N(p-1)}{p}\right)}{\Gamma(p+1)\Gamma \left(2+\frac{N(p-1)}{p}\right)}.
$$
In order to prove \emph{(1)}, it is enough to observe that
$$
F_p(\overline \alpha_p)=\max_{\alpha \in [0,\infty[}\frac{\alpha^{p-1}}{e^{\alpha}}=\left(\frac{p-1}{e}  \right)^{p-1},
$$
where replacing in \eqref{lower_bound} and \eqref{upper_bound} we get the desired inequalities. Analogously, to prove \emph{(2)} we use that
$$
F_p(\overline \alpha_p)=\max_{\alpha \in [0,\infty[}\frac{\alpha^{p-1}}{(1+\alpha)^m}=\frac{(p-1)^{\,p-1}(m-p+1)^{m-p+1}}{m^m}.
$$
\end{proof}

  In the following result we establish that, for certain reaction terms $f(u)$, extremal solutions $u_p^*$ to \eqref{corona} tend to zero.

\begin{proposition} Let $\Omega=B_1(0)$  ($N\geq 1$) and let $u_p^*$ be the solution to \eqref{corona}  with $\lambda=\lambda_p^*$ for $f(u)=e^u$ or for $f(u)=(1+u)^m$ with $m>e^{-1}$. Then, $\|u_p^*\|_\infty \to 0$ when $p \to 1$.
\end{proposition}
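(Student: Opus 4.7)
The plan is to combine the integral inequality derived in the proof of Theorem~\ref{limit_p-laplacian}(2) with the sharp two--sided estimates on $\lambda_p^*$ from Corollary~\ref{good_estimates}, and then to localise $\alpha_p:=\|u_p^*\|_\infty$ on the unimodal profile of $F_p(s):=s^{p-1}/f(s)$.

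The starting point is the bound
\[
\alpha_p \;\leq\; \frac{p-1}{p}\!\left(\frac{\lambda_p^*\, f(\alpha_p)}{N}\right)^{\!1/(p-1)},
\]
which follows from the integral representation \eqref{gador} and the monotone estimate $u_p^*(s)\leq \alpha_p$ already used in the proof of Theorem~\ref{limit_p-laplacian}(2). Rewriting this as $F_p(\alpha_p)\leq (\lambda_p^*/N)((p-1)/p)^{p-1}$ and inserting the upper bound of Corollary~\ref{good_estimates} yields, for $f(u)=e^u$,
\[
\frac{\alpha_p^{p-1}}{e^{\alpha_p}}\;\leq\; \Bigl(\frac{p-1}{e}\Bigr)^{p-1}\frac{\Gamma\!\bigl(p+1+\tfrac{N(p-1)}{p}\bigr)}{\Gamma(p+1)\,\Gamma\!\bigl(2+\tfrac{N(p-1)}{p}\bigr)},
\]
and analogously for $f(u)=(1+u)^m$. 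A short Taylor expansion of $\Gamma$ about $\Gamma(2)$ shows that the Gamma ratio equals $1+O((p-1)^2)$ as $p\to 1$, because the linear term cancels thanks to the identity $p+1+N(p-1)/p=(p+1)+(2+N(p-1)/p)-2$ relating the arguments. Writing the resulting bound as $F_p(\alpha_p)\leq F_p(\overline\alpha_p)(1+o(1))$, where $\overline\alpha_p$ is the maximiser of $F_p$, and recalling that $\overline\alpha_p=p-1$ for $e^u$ and $\overline\alpha_p=(p-1)/(m-p+1)$ for $(1+u)^m$---both of which vanish as $p\to 1$---the left branch of the graph of $F_p$ already collapses to $\{0\}$.

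The remaining issue is to exclude the far right branch, where $F_p$ has decreased back below the bound. To this end I use that $u_p^*=\sup_{\lambda<\lambda_p^*}w_{\lambda(p)}$ is the upper endpoint of the monotone increasing minimal branch, together with the conclusion of Theorem~\ref{limit_p-laplacian}(2), and argue by contradiction. Assuming a subsequence $\alpha_{p_n}\to\beta\in(0,\infty]$, fix $\tilde\lambda<N/f(0)$ and $\kappa$ with $0<\kappa<\min\{\beta, f^{-1}(N/\tilde\lambda)\}$. By continuity of the minimal branch in $\lambda$ there exists $\lambda_n\in(0,\lambda_{p_n}^*)$ with $\|w_{\lambda_n(p_n)}\|_\infty=\kappa$. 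Combining this equality with the sharp integral inequality (and, for the two specific $f$'s, the corresponding form of Corollary~\ref{good_estimates}) traps $\lambda_n$ in an interval that becomes incompatible with the monotonicity bound $\|w_{\lambda_n(p_n)}\|_\infty\leq \|w_{\tilde\lambda(p_n)}\|_\infty$ furnished by Theorem~\ref{limit_p-laplacian}(2), forcing $\kappa=0$. Hence $\alpha_p\to 0$.

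The hard step is the last one: the raw integral inequality tolerates a far right position for $\alpha_p$, so one genuinely needs to bring in the variational structure of $u_p^*$ as the monotone limit of minimal solutions, and not just the bound on $F_p$. The precise form of $f$ controls the argument quantitatively; the hypothesis $m>e^{-1}$ in the power--type case is what keeps the Gamma--ratio correction in Corollary~\ref{good_estimates} of order $1+O((p-1)^2)$ in the relevant exponentiated quantity $F_p(\overline\alpha_p)^{1/(p-1)}$, which is exactly what is needed to rerun the $e^u$--style contradiction in that setting.
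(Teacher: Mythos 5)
Your overall strategy (the upper bound of Corollary \ref{good_estimates} combined with the contradiction machinery of Theorem \ref{limit_p-laplacian}(2)) points in the right direction, but the argument has gaps at both of its key junctures. First, the localisation via $F_p$ gives nothing: comparing the integral inequality with the bound $\lambda_p^*\le \lambda_{1(p)}F_p(\overline\alpha_p)$ only yields $F_p(\alpha_p)\le F_p(\overline\alpha_p)\,(1+o(1))$, which is satisfied by \emph{every} $\alpha$ because $F_p(\overline\alpha_p)$ is the maximum of $F_p$; so the claim that ``the left branch collapses to $\{0\}$'' does not follow. Second, and more importantly, you never establish the one fact the paper's proof actually hinges on, namely the \emph{strict} inequality $\lambda_p^*<N=N/f(0)$ for $p$ close to $1$. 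This is not a consequence of the Gamma ratio $G(p,N)$ being $1+O((p-1)^2)$ (true, but neutral); it comes from the full prefactor $(p/e)^{p-1}G(p,N)$, resp.\ $p^{\,p-1}(m-p+1)^{m-p+1}m^{-m}G(p,N)$, having derivative $-1$, resp.\ $-\log m-1$, at $p=1$, hence being strictly below $1$ in a right neighbourhood of $p=1$. That is exactly where $m>e^{-1}$ enters (it is the condition $-\log m-1<0$); your stated role for this hypothesis --- controlling the Gamma--ratio correction inside $F_p(\overline\alpha_p)^{1/(p-1)}$ --- is not correct, since $G(p,N)$ does not depend on $m$ at all. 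Once $\lambda_p^*<N/f(0)$ is known, one can feed $\tilde\lambda=\lambda_p^*$ into the proof of Theorem \ref{limit_p-laplacian}(2); that is the paper's route.

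Finally, the concluding ``trapping'' step is not carried out and, as sketched, does not close. If $\lambda_n\le\tilde\lambda$ you indeed get $\kappa\le\|w_{\tilde\lambda(p_n)}\|_\infty\to0$, a contradiction; but the remaining case $\tilde\lambda<\lambda_n<\lambda_{p_n}^*$ is the entire difficulty, and there the integral inequality reads $\kappa\le\left(\lambda_n f(\kappa)/N\right)^{1/(p_n-1)}\frac{p_n-1}{p_n}$ with $\lambda_n f(\kappa)/N$ possibly larger than $1$ (since $f(\kappa)>f(0)$ while $\lambda_n$ may approach $N/f(0)$), so the right--hand side behaves like $f(\kappa)^{1/(p_n-1)}$ and blows up, yielding no contradiction for a fixed $\kappa>0$. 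A quantitative control of the gap $N-\lambda_{p_n}^*$, supplied precisely by the derivative computation described above, is needed before this step can be made to work; without it the proof collapses exactly at what you yourself label ``the hard step''.
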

\begin{proof}
In case $f(u)=e^u$, by Corollary \ref{good_estimates}, $\lambda_p^*(e^u)\leq N \left(\frac{p}{e}  \right)^{p-1}\, G(p,N)$
where $G(p,N):= \frac{\Gamma \left(p+1+\frac{N(p-1)}{p}\right)}{\Gamma(p+1)\Gamma \left(2+\frac{N(p-1)}{p}\right)}$. Observe that for $N\geq1$ we obtain $G(1,N)=1$. Moreover, we recall that $\Gamma^{\, \prime}(z)=\Gamma(z)\psi(z)$ where $\psi(z)=\int_0^\infty \left(\frac{e^{-t}}{t}-\frac{e^{-zt}}{1-e^{-t}}   \right)dt$ is the Digamma function which satisfies $\psi(2)=1-\gamma$ being $\gamma$ the Euler-Mascheroni constant. Then, we get that
\begin{multline*}
\frac{\partial G(p,N)}{\partial p}=
\\
\frac{\Gamma \left(p+1+\frac{N(p-1)}{p}\right) \psi \left(p+1+\frac{N(p-1)}{p} \right)\left(1+\frac{N}{p^2}  \right)\Gamma(p+1)\Gamma \left(2+\frac{N(p-1)}{p}\right) }{\Gamma(p+1)^2\,\Gamma \left(2+\frac{N(p-1)}{p}\right)^2}
\\
-\frac{\Gamma \left(p+1+\frac{N(p-1)}{p}\right) \Gamma(p+1)\psi(p+1)\Gamma \left(2+\frac{N(p-1)}{p}\right) }{\Gamma(p+1)^2\,\Gamma \left(2+\frac{N(p-1)}{p}\right)^2}
\\
-\frac{\Gamma \left(p+1+\frac{N(p-1)}{p}\right) \Gamma(p+1)\Gamma \left(2+\frac{N(p-1)}{p}\right)\psi  \left(2+\frac{N(p-1)}{p}\right)\frac{N}{p^2} }{\Gamma(p+1)^2\,\Gamma \left(2+\frac{N(p-1)}{p}\right)^2}.
\end{multline*}
As a consequence,
\begin{align*}
\lim_{p\to 1^+}\frac{\partial G(p,N)}{\partial p} &= (1-\gamma)(1+N)-(1-\gamma)-(1-\gamma)N=0
\end{align*}
where have we used that $\Gamma(2)=1$. Therefore
\begin{multline*}
\lim_{p\to 1^+}\frac{\partial}{\partial p}\left(\left(\frac{p}{e}  \right)^{p-1}G(p,N)\right) =
\\
 \lim_{p\to 1^+} \left[\left(\frac{p}{e}  \right)^{p-1}\left(\log p-\frac{1}{p} \right)G(p,N)+ \left(\frac{p}{e}  \right)^{p-1} \frac{\partial G(p,N)}{\partial p}   \right]=-1.
\end{multline*}
In conclusion, the function $p\mapsto \left(\frac{p}{e}  \right)^{p-1}G(p,N)$ is decreasing and less than $1$ in a neighborhood of $p = 1$. Thus, there is $\delta>0$ such that
$$
\lambda_p^*(e^u)\leq N \left(\frac{p}{e}  \right)^{p-1}\, G(p,N)<N, \qquad 1<p<1+\delta.
$$
Therefore, if we take as $\tilde \lambda =\lambda_p^*$ into the proof of $(2)$ from Theorem \ref{limit_p-laplacian} and argue like there, then we will prove that $\|w_{\lambda_p^*}\|_\infty=\|u_p^*\|_\infty \to 0$ as $p\to 1$.

  Similarly, taking $f(u)=(1+u)^m$, by Corollary \ref{good_estimates},
   $$\lambda_p^*((1+u)^m)\leq N \frac{p^{\,p-1}(m-p+1)^{m-p+1}}{m^m}\, G(p,N).$$
Note that in this case
\begin{multline*}
\lim_{p\to 1^+}\frac{\partial}{\partial p}\left(  \frac{p^{\,p-1}(m-p+1)^{m-p+1}}{m^m}  G(p,N)\right) =
\\
\lim_{p\to 1^+} \frac{p^{\,p-1}(m-p+1)^{m-p+1}}{m^m}\left[ \left(\log \frac{p}{m-p+1}-\frac{1}{p}    \right)G(p,N)+   \frac{\partial G(p,N)}{\partial p}\right]
\\
=-\log m-1<0,
\end{multline*}
since $m>e^{-1}$. Thus,
$$
\lambda_p^*((1+u)^m)<N, \qquad 1<p<1+\delta.
$$
for some $\delta>0$. Finally, reasoning as before, we arrive at the desired result.
\end{proof}
\begin{remark}\rm
Note that, when proving that the extremal solution $u_p^*$ tends to zero, in our method it is essential that
$\lambda_p^*<\frac{N}{f(0)}$ for $p$ small enough close to $1$. Unfortunately this method is not general. For instance, if we choose $f(u)=1+u^2$, then it is not possible to show that $\lambda_p^*<N$ since now our upper bound \eqref{upper_bound} is increasing
 near $p=1$.
\end{remark}

\section*{Ackonwledgement}
The authors wish to thank Prof. J. Carmona for valuable comments concerning this paper.

The first author is partially supported by PGC2018-096422-B-I00 \newline (MCIU/AEI/FEDER, UE) and by Junta de Andaluc\'ia FQM-116 (Spain). The second author is supported by the Spanish Ministerio de Ciencia, Innovaci\'on y Universidades and FEDER, under project PGC2018--094775--B--I00.

\end{document}